\newcommand{\bed}{\begin{displaymath}}
\newcommand{\eed}{\end{displaymath}}
\newcommand{\bea}{\bed\begin{array}{rl}}
\newcommand{\eea}{\end{array}\eed}
\newcommand{\barray}{\begin{array}{ll}}
\newcommand{\earray}{\end{array}}
\newtheorem{theorem}{Theorem}[section]
\newtheorem{lemma}[theorem]{Lemma}
\newtheorem{remark}{Remark}[section]
\newtheorem{assumption}{Assumption}[section]
\begin{document}

\title{Convergence  of the tamed Euler scheme for stochastic differential equations with Piecewise Continuous Arguments under non-Lipschitz continuous coefficients}
%\date{}

\author{M.H. Song,\thanks{Corresponding author. Email: songmh@lsec.cc.ac.cn,~yulanlu@.hit.edu.cn,~mzliu@hit.edu.cn
\vspace{6pt}}~~Y.L. Lu,~~M.Z. Liu\\\vspace{6pt}{\em{Department of Mathematics,
Harbin Institute of~ Technology, Harbin, China, 150001}}\thanks{This work is supported by the NSF of P.R. China (No.11071050)}}
 \maketitle
\begin{abstract}
Recently, Martin Hutzenthaler pointed out that the explicit Euler method fails to converge strongly to the exact solution of a stochastic differential equation (SDE) with superlinearly growing and globally one sided Lipschitz drift coefficient. Afterwards, he proposed an explicit and easily implementable Euler method, i.e tamed Euler method, for such an SDE and showed that this method converges strongly with order of one half. In this paper, we use the tamed Euler method to solve the stochastic differential equations with piecewise continuous arguments (SEPCAs) with superlinearly growing coefficients and prove that this method is convergent with strong order one half.

\vskip 0.3 in \noindent {\bf Keywords:}
 Stochastic differential equations with piecewise continuous arguments; tamed Euler scheme; Strong convergence.
 \end{abstract}
\section{Introduction}

\setcounter{equation}{0}
\noindent

Differential equations with piecewise continuous arguments(EPCAs) represent a hybrid of continuous and discrete dynamical systems and combine the properties of both differential and difference equations. They provide a mathematical modeling for physical and biological systems in which the rate of  the change of these systems depends on its past state. This kind of equations plays an important role in many branches of science and industry such as physics, biology and control theory, and has been initialed in \cite{j,w}. The general theory and basic results for EPCAs have by now been thoroughly investigated in the book of Wiener \cite{jo}.

Since many EPCAs can't be solved explicitly, computing numerical solutions and analysing their properties are necessary. The first work devoted to numerical study for EPCAs is the paper of Liu et al \cite{liu}. Subsequently, Song et al \cite{song} and Yang et al \cite{yang} studies the stability of $\theta$-methods for advanced equations and Runge-Kutta method for retarded equations, respectively. Liu and Gao \cite{gao,mzliu} gave the conditions under which the Runge-Kutta methods preserve the oscillation of linear EPCAs, and afterwards. Some authors considered the stability and oscillation of the numerical solutions \cite{msong,qwang,wangq}. Song and Liu \cite{mhsong} constructed the convergence improved linear multistep method for EPCAs.

However, systems are often influenced by environmental or some occasional events, which leads that the deterministic differential equations can't demonstrate the real world. In order to avoid this problem, many researchers turn to study SDEs. And there have been lots of results on both analytical and numerical solutions. The explicit Euler method (see \cite{Mao,Milstein,Kloden}) is most commonly used for approximating SDEs with global Lipschitz continuous coefficients. Unfortunately, the coefficients of large number of SDEs don't satisfy global Lipschitz condition \cite{Mil,Higham,zhang,jiang,xrmao,wmao,XRmao,lswang,Tak}. Higham et al \cite{Higham} showed that the explicit Euler method is convergent strongly when the coefficients of SDEs are local Lipschitz continuous and the $p$ moment of both exact and numerical solutions are bounded. But, Martin Hutzenthaler in \cite{mh1} proved that the explicit Euler method does not converge in the strong mean square sense to the exact solution of SDE with superlinearly growing and globally one-side Lipschitz continuous drift coefficient. To overcome this difficulty, he in \cite{mh} proposed a modified explicit Euler method, i.e. tamed Euler method in which the drift term is modified such that it is uniformly bounded, which is convergent strongly for such SDE.

Up to now, only a few people considered SEPCAs. Zhang and Song in \cite{Ling} investigated the strong convergence of explicit Euler method for SEPCAs when the coefficients are globally Lipschitz continuous or grow at most linearly. Moreover, Song and Zhang in \cite{zhangling} proved the convergence in probability of explicit Euler method under the local Lipschitz and Khasminskii-type conditions.

Throughout the whole paper, we investigate the numerical solution of the tamed Euler method to SEPCAs of the form
\begin{equation}\label{SEPCA}
 dx(t)=\mu(x(t),x([t]))dt+\sigma(x(t),x([t]))dB(t),~t\in[0,T],
 \end{equation}
with initial value $x(0)=\xi$, where $B(t)$ is a $r$-dimensional Brownian motion, $x(t)\in \mathbb{R}^d, \mu: \mathbb{R}^d\times \mathbb{R}^d\rightarrow \mathbb{R}^d $ is a continuously differentiable function, $\sigma: \mathbb{R}^d\times \mathbb{R}^d\rightarrow \mathbb{R}^{d\times r}$ satisfies global Lipschitz condition. $[\cdot]$ denotes the greatest-integer function.
\section{Notations and main theorem}
\setcounter{equation}{0}
\noindent
Let $(\Omega,\mathcal{F},\mathbb{P})$ be a completed probability space with a filtration $\{\mathcal{F}_t\}_{t\geq 0}$ satisfying the usual conditions and $B(t)$ be a $r$-dimensional Brownian motion defined on this probability space. Furthermore, for any real valued $\{\mathcal{F}_t\}$-adapted process $x(t)$, we use $\|x(t)\|_{L_p(\Omega,\mathbb{R})}$ to denote $\big(\mathbb{E}\|x(t)\|^p\big)^\frac{1}{p}$.

We use notations $\|x\|:=(x_1^2+x_2^2+\ldots+x_d^2)^\frac{1}{2}$, $\big<x,y\big>:=x_1\cdot y_1+x_2\cdot y_2+\ldots+x_d\cdot y_d$ for all $x=(x_1,x_2,\ldots,x_d)\in\mathbb{R}^d$, $y=(y_1,y_2,\ldots,y_d)\in\mathbb{R}^d$, and $\|A\|:=\sup_{x\in\mathbb{R}^d,\|x\|\leq 1}\|Ax\|$ for all $A\in\mathbb{R}^{r\times d}$. What's more, if $A$ is a matrix or vector, its transpose is defined by $A^T$. Set $\sum_{i=u}^n=0$ if $u>n$. In the whole paper, we make the following assumptions on the SEPCAs (\ref{SEPCA}).
\begin{assumption}\label{assumption}
Assume there exist non-negative constants $K, c$ such that for any $x, y, x_1 ,y_1,x_2$, $y_2\in \mathbb{R}^d$, the coefficients $\mu$ and $\sigma$ satisfy
\begin{eqnarray}
% \nonumber to remove numbering (before each equation)
 \label{con1}\|\sigma(x_1,y_1)-\sigma(x_2,y_2)\|\leq K(\|x_1-x_2\|+\|y_1-y_2\|),\\
  \label{con2}\big<x_1-x_2, \mu(x_1,y)-\mu(x_2,y)\big>\leq K\|x_1-x_2\|^2,\\
 \label{con3}\|\mu(x,y_1)-\mu(x,y_2)\|\leq K\|y_1-y_2\|,\\
  \label{con4}\|\mu_x(x,y)\|\leq K(1+\|x\|^c).
\end{eqnarray}
\end{assumption}
\begin{remark}\label{remark}The conditions \eqref{con1}, \eqref{con2}, \eqref{con3} imply that there exists unique solution to equation \eqref{SEPCA}, and the $p$ moments of this solution is bounded in any finite interval $[0,T]$ (see \cite{Ling}).
\end{remark}
In the rest of this paper, let $h=\frac{1}{m}$ be given stepsize with integer $m\geq 1$. Grid points $t_n$ are defined as $t_n=nh,~n=0,1,\cdots.$ For simplicity, we assume $T=Nh$, then $N=Tm$. We consider the explicit tamed method for \eqref{SEPCA}, which is defined by taking $y_0=x(0)$ and, generally
\begin{equation}\label{yn}
y_{n+1}=y_n+\frac{\mu(y_{n},y_{[nh]m})h}{1+h\|\mu(y_{n},y_{[nh]m})\|}+\sigma(y_{n},y_{[nh]m})\Delta B_{n}
\end{equation}
where $\Delta B_{n}=B(t_{n+1})-B(t_{n})$, $y_n$ is the approximation to $x(t_n)$.

In order to formulate the convergence theorem for the tamed Euler method \eqref{yn}, we now introduce appropriate time continuous interpolations of the time discrete numerical approximations \eqref{yn}. More formally, let $y(t):[0,T]\times\Omega\rightarrow\mathbb{R}^d$, be a sequence of stochastic process given by
\begin{equation}\label{yt}
  y(t)=y_{n}+\int_{t_{n}}^t\frac{\mu(y_{n},y_{[nh]m})}{1+h\|\mu(y_{n},y_{[nh]m})\|}ds+\int_{t_{n}}^{t}\sigma(y_{n},y_{[nh]m})dB(s)
\end{equation}
for all $t\in[t_{n},t_{n+1}),~n\in\{0,1,\ldots,Tm-1\}$ and $m\in\mathbb{N}=\{1,2,...\}$. It is easy to get that $y(t)$ is $\{\mathcal{F}_t\}-adapted$ stochastic process. Now we can establish the main theorem of this paper.
\begin{theorem}\label{th2}
Suppose Assumption \ref{assumption} hold. Then for each $p\geq1$ there exists non-negative constant $C$ dependent on $p, K, r$ and $c$, but independent of $h$, such that
\begin{equation}\label{conclusion}
  \Big(\mathbb{E}\Big[\sup_{t\in[0,T]}\|x(t)-y(t)\|^p\Big]\Big)^{\frac{1}{p}}\leq Ch^{\frac{1}{2}}.
\end{equation}
Here $x(t)$ denotes the exact solution of \eqref{SEPCA}, $y(t)$ is the continuous approximation solution of tamed-Euler method.
\end{theorem}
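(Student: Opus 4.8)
The plan is to adapt the tamed-Euler framework of \cite{mh} to the piecewise-continuous-argument setting and close the estimate with a one-sided-Lipschitz Ito argument plus Gronwall. Write $\bar y(s):=y_n$ and $\hat y(s):=y_{[nh]m}$ for $s\in[t_n,t_{n+1})$, so that \eqref{yt} reads $y(t)=\xi+\int_0^t\frac{\mu(\bar y(s),\hat y(s))}{1+h\|\mu(\bar y(s),\hat y(s))\|}\,ds+\int_0^t\sigma(\bar y(s),\hat y(s))\,dB(s)$, while $x(t)=\xi+\int_0^t\mu(x(s),x([s]))\,ds+\int_0^t\sigma(x(s),x([s]))\,dB(s)$. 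A first observation, crucial for the delay structure, is that because $mh=1$ every integer is a grid point, so no step $[t_n,t_{n+1})$ contains an integer in its interior; hence $[s]=[t_n]$ and $\hat y(s)=y_{[t_n]m}=y([s])$ for $s\in[t_n,t_{n+1})$. Consequently the delayed-argument error $\|x([s])-\hat y(s)\|=\|x([s])-y([s])\|$ is always dominated by $\sup_{r\le s}\|x(r)-y(r)\|$, which is exactly what will let the retarded term feed into a Gronwall argument on the running supremum.

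Before the error analysis I would establish two preliminaries. First, uniform-in-$h$ moment bounds for the scheme, $\sup_{0\le n\le N}\mathbb{E}\|y_n\|^p\le C_p$ and $\mathbb{E}[\sup_{0\le t\le T}\|y(t)\|^p]\le C_p$ for every $p\ge1$; the exact-solution moments are supplied by \remref{remark}. Second, a one-step continuity estimate $\mathbb{E}\sup_{t_n\le s<t_{n+1}}\|y(s)-y_n\|^p\le C_ph^{p/2}$ (the tamed drift contributes $O(h)$ and the stochastic integral $O(h^{1/2})$), and its analogue for $x$. Using \eqref{con4} and the fundamental theorem of calculus I would also record the polynomial growth $\|\mu(x,y)\|\le C(1+\|x\|^{c+1}+\|y\|)$ and the local-Lipschitz bound $\|\mu(x_1,y)-\mu(x_2,y)\|\le K(1+\|x_1\|^c+\|x_2\|^c)\|x_1-x_2\|$; combined with the moment bounds these give $\mathbb{E}\|\mu(\bar y,\hat y)\|^p\le C_p$ for all $p$, which is the key device for controlling the superlinear drift.

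For the convergence estimate set $e(t):=x(t)-y(t)$ and apply Ito's formula to $\|e(t)\|^2$. I would split the drift difference into $\mu(x(s),x([s]))-\mu(\bar y(s),x([s]))$, the term $\mu(\bar y(s),x([s]))-\mu(\bar y(s),\hat y(s))$, and the taming defect $\mu(\bar y,\hat y)\big(1-(1+h\|\mu(\bar y,\hat y)\|)^{-1}\big)$. Writing $e(s)=(x(s)-\bar y(s))+(\bar y(s)-y(s))$ and pairing the first piece with $x(s)-\bar y(s)$, condition \eqref{con2} gives $\le K\|x(s)-\bar y(s)\|^2\le 2K\|e(s)\|^2+2K\|y(s)-y_n\|^2$, while the mismatch $\langle\bar y(s)-y(s),\cdot\rangle$ is handled by the local-Lipschitz bound, Young's inequality and the continuity estimate, producing a term of size $\mathbb{E}\|e(s)\|^2+O(h)$. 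The second piece is controlled by \eqref{con3} and the delay identity, giving $\le\tfrac12\|e(s)\|^2+C\sup_{r\le s}\|e(r)\|^2$, and the taming defect has $L^p$-norm $O(h)$ by the moment bounds. The diffusion difference is treated by \eqref{con1}, so that $\|\sigma(x,x([\cdot]))-\sigma(\bar y,\hat y)\|^2\le C(\|e(s)\|^2+\|y(s)-y_n\|^2+\sup_{r\le s}\|e(r)\|^2)$.

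Finally I would take $\sup_{0\le t\le\tau}$ and expectations; the martingale term is dominated by the Burkholder--Davis--Gundy and Young inequalities, letting $\tfrac12\mathbb{E}\sup_{t\le\tau}\|e\|^2$ be absorbed on the left, so that $\mathbb{E}\sup_{t\le\tau}\|e(t)\|^2\le Ch+C\int_0^\tau\mathbb{E}\sup_{r\le s}\|e(r)\|^2\,ds$, and Gronwall's inequality yields $\mathbb{E}\sup_{t\le T}\|e(t)\|^2\le Ch$. Repeating the computation with $\|e(t)\|^p$ in place of $\|e(t)\|^2$ for $p\ge2$, and using Jensen's inequality for $1\le p<2$, gives \eqref{conclusion}. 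I expect the genuinely hard part to be the uniform moment bounds for the tamed scheme: the superlinear drift means a naive one-step estimate does not close, and one must exploit the one-sided structure \eqref{con2} together with the explicit $1/(1+h\|\mu\|)$ damping to show that $\mathbb{E}\|y_{n+1}\|^p$ grows by at most a factor $1+Ch$ per step. Every step of the error analysis rests on these bounds holding uniformly in $h$.
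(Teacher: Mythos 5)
Your error analysis is essentially the paper's own proof of Theorem \ref{th2}: It\^o's formula applied to $\|x(t)-y(t)\|^2$, a three-way splitting of the drift difference handled by \eqref{con2}, \eqref{con3} and a one-step continuity estimate of order $h^{1/2}$, a taming defect whose square contributes $h^2\|\mu\|^4$, then Burkholder--Davis--Gundy, absorption of $\tfrac12\sup_t\|e(t)\|^2$ on the left, and Gronwall; your observation that the grid aligns with the integers (so $y([s])$ is always a grid value) is also correct and implicitly used in the paper. The genuine gap is in the step you defer: the uniform-in-$h$ moment bounds $\sup_m\sup_n\mathbb{E}\|y_n\|^p<\infty$ (Theorem \ref{th1} of the paper), on which every subsequent estimate --- Lemma \ref{6}, the bound \eqref{ny}, and \eqref{3.43} --- rests. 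You propose to prove them by showing $\mathbb{E}\|y_{n+1}\|^p\le(1+Ch)\,\mathbb{E}\|y_n\|^p$ per step, ``exploiting the one-sided structure together with the $1/(1+h\|\mu\|)$ damping.'' This recursion does not close when $c>1$. Expanding $\|y_{n+1}\|^2$ from \eqref{yn}, the cross term $2h\langle y_n,\mu\rangle/(1+h\|\mu\|)$ is indeed controlled by \eqref{con2}--\eqref{con3}, but the squared tamed-drift term $h^2\|\mu(y_n,y_{[nh]m})\|^2/(1+h\|\mu\|)^2$ is only bounded by $\min\big(1,\,h^2\|\mu\|^2\big)$, and since $\|\mu(x,y)\|\le C(1+\|x\|^{c+1}+\|y\|)$ the quantity $h^2\|\mu(y_n,\cdot)\|^2$ cannot be dominated pointwise by $Ch(1+\|y_n\|^2+\|y_{[nh]m}\|^2)$ unless $\|y_n\|\lesssim h^{-1/(2c)}$. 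Splitting on the event $\{\|y_n\|\le h^{-1/(2c)}\}$ requires a bound on $\mathbb{P}\big[\|y_n\|>h^{-1/(2c)}\big]$, which by Markov's inequality needs precisely the moments one is trying to prove: the argument is circular. The crude unconditional estimate (each tamed increment is at most $1$ plus a Gaussian term) only yields $\|y_n\|_{L_p}=O(m)$, not $O(1)$.

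This is exactly why the paper, following Hutzenthaler--Jentzen--Kloeden \cite{mh}, devotes Lemmas \ref{lemma1}--\ref{lemma5} and Theorem \ref{th1} to this point: one introduces the dominating random variable $D_n$ of \eqref{dn} and the events $\Omega_n$ of \eqref{wn} on which $\|y_j\|\le m^{1/(2c)}$, proves the pathwise domination $1_{\Omega_n}\|y_n\|\le D_n$ by induction (Lemma \ref{lemma1}, whose case analysis on $\tau_n$ versus $[nh]m$ is the part genuinely specific to the piecewise-continuous-argument structure), shows that $D_n$ has moments bounded uniformly in $m$ via exponential martingale estimates (Lemmas \ref{lemma2}--\ref{lemma4}), and shows that $\mathbb{P}[(\Omega_{Tm})^c]$ decays faster than any power of $m$ (Lemma \ref{lemma5}); the crude $O(m)$ bound on the exceptional set $(\Omega_n)^c$ is then neutralized by H\"older's inequality as in \eqref{3.25}. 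Without this machinery, or some equivalent non-circular device (e.g.\ stopping at the level $h^{-1/(2c)}$ together with an independent estimate of the stopped probability), your proposal proves the convergence estimate only conditionally on moment bounds that remain unestablished, so the proof of Theorem \ref{th2} is incomplete.
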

The detailed proof of this theorem is given in Section 3.
\begin{remark}
Theorem \ref{th2} illustrates that the time continuous tamed-Euler approximations \eqref{yt} converge in the strong $L_p$-sense with the supremum over the time interval $[0,T]$ inside the expectation to the exact solution of \eqref{SEPCA} with the standard convergence order $0.5$.
\end{remark}
\section{Estimation of $p$-moments and proof of Theorem \ref{th2}}
\setcounter{equation}{0}
\noindent
First of all, we introduce several notations here
\begin{equation}\label{lambda}
  \lambda=(1+T+2\|\mu(0,0)\|+4K+2\|\sigma(0,0)\|)^4,
\end{equation}
\begin{equation}\label{alphan}
  \begin{split}
  \alpha_n=1_{\{\|y_{[nh]m}\|\leq 1,\|y_{n}\|\leq 1\}^c}\Big<\frac{y_{n}}{\sqrt{\|y_{n}\|^2+\|y_{[nh]m}\|^2}},\frac{\sigma(y_{n},y_{[nh]m})}{\sqrt{\|y_{n}\|^2+\|y_{[nh]m}\|^2}}\Delta B_{n}\Big>,
  \end{split}
\end{equation}
\begin{equation}\label{dn}
D_n=(\lambda+\|\xi\|+1)\exp\big(4T\sqrt{\lambda}+2\sup_{u\in\{-1,0,1,...,n-1\}} \sum_{i=u+1}^{n-1}(\sqrt{\lambda}\|\Delta B_i\|^2+\alpha_i)\big),
\end{equation}
and
\begin{equation}\label{wn}
 \Omega_n=\Big\{\omega\in \Omega\Big|\sup_{j\in\{0,1,\cdots,n-1\}}D_{j}(\omega)\leq m^{\frac{1}{2c}},\sup_{j\in\{0,1,\cdots,n-1\}}\|\Delta B_{j}\|\leq1\Big\},
\end{equation}
here $\Omega_n\in\mathcal{F}$ for all $n\in\{1,2,...,Tm\}$ and $m\in\mathbb{N}$.
In the following, we give some lemmas which will be useful to prove the main theorem \ref{th2}.
\begin{lemma}\label{lemma1}
If Assumption \ref{assumption} holds, and $y_{n}, D_{n}$ and $\Omega_{n}$ are defined by \eqref{yn}, \eqref{dn}, \eqref{wn}. Then
\begin{equation}\label{dom}
  1_{\Omega_n}\|y_n\|\leq D_n
\end{equation}
for all $n\in\{0,1,\ldots,Tm\}$ and $m\in\mathbb{N}$.
\end{lemma}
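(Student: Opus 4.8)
The plan is to prove the estimate by induction on $n$, exploiting that the events $\Omega_n$ are nested decreasingly, $\Omega_{n+1}\subseteq\Omega_n\subseteq\cdots\subseteq\Omega_0=\Omega$. The base case $n=0$ is immediate: $y_0=\xi$ and, since the inner sum in \eqref{dn} is empty, $D_0=(\lambda+\|\xi\|+1)\exp(4T\sqrt\lambda)\ge\|\xi\|$. For the inductive step I assume $1_{\Omega_j}\|y_j\|\le D_j$ for all $j\le n$ and prove $1_{\Omega_{n+1}}\|y_{n+1}\|\le D_{n+1}$. Because of the nesting, on the event $\Omega_{n+1}$ every earlier bound is available, so there $\|y_j\|\le D_j\le m^{1/(2c)}$ and $\|\Delta B_j\|\le1$ for all $j\le n$. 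The decisive consequence is the taming estimate $h\|y_j\|^c\le m^{-1}\,m^{1/2}=h^{1/2}\le1$, which is what keeps the superlinearly growing drift under control throughout.

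Writing $\mu_n:=\mu(y_n,y_{[nh]m})$, $\sigma_n:=\sigma(y_n,y_{[nh]m})$, and splitting the increment in \eqref{yn} as $y_{n+1}=y_n+v_n+w_n$ with tamed drift $v_n=\mu_n h/(1+h\|\mu_n\|)$ and diffusion $w_n=\sigma_n\Delta B_n$, I would expand $\|y_{n+1}\|^2$ and bound the six resulting terms. The crucial point is that the dangerous superlinear factor appears only through $\|\mu_n\|$, and this is neutralised twice over: in the drift cross term $2\langle y_n,v_n\rangle$ the inner product is replaced, via the one-sided Lipschitz bound \eqref{con2} together with \eqref{con3} and \eqref{con4} (integrating the derivative bound gives $\|\mu(x,y)-\mu(0,y)\|\le K\|x\|(1+\|x\|^c)$), by the benign quantity $\langle y_n,\mu_n\rangle\le K\|y_n\|^2+\|y_n\|(\|\mu(0,0)\|+K\|y_{[nh]m}\|)$; and in $\|v_n\|^2$ the taming gives $\|v_n\|\le1$. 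For the diffusion terms I use \eqref{con1} to obtain $\|\sigma_n\|\le\lambda^{1/4}\sqrt{\|y_n\|^2+\|y_{[nh]m}\|^2}$ in the regime where $\max(\|y_n\|,\|y_{[nh]m}\|)>1$, so that $\|w_n\|^2\le\sqrt\lambda\,R^2\|\Delta B_n\|^2$ with $R^2:=\|y_n\|^2+\|y_{[nh]m}\|^2$, and I recognise the stochastic cross term as $2\langle y_n,w_n\rangle=2R^2\alpha_n$, which is exactly how $\alpha_n$ in \eqref{alphan} is built. All remaining cross terms, which are linear in $\|\Delta B_n\|$, are converted by $ab\le\frac12(a^2+b^2)$ into contributions of the form $h+\|\Delta B_n\|^2$.

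Collecting the estimates and bounding $\|y_n\|^2\le R^2$, I expect a one-step inequality of the form $\|y_{n+1}\|^2\le R^2\bigl(1+2\alpha_n+\sqrt\lambda\|\Delta B_n\|^2+(\text{an }O(h)\text{ drift budget})\bigr)$; applying $1+x\le e^x$ and taking square roots then yields $\|y_{n+1}\|\le R\exp\bigl(\alpha_n+\tfrac{\sqrt\lambda}{2}\|\Delta B_n\|^2+\tfrac12\,O(h)\bigr)$. Summed over the at most $Tm$ steps the drift budget telescopes to the fixed term $4T\sqrt\lambda$ in the exponent of \eqref{dn} (since $Tm\cdot h=T$), while the diffusion contributions accumulate into the random sum. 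The two regimes separate cleanly because one checks $D_{n+1}=\max\{D_n\exp(2(\sqrt\lambda\|\Delta B_n\|^2+\alpha_n)),\,(\lambda+\|\xi\|+1)\exp(4T\sqrt\lambda)\}$: when $\|y_n\|\le1$ and $\|y_{[nh]m}\|\le1$ the indicator forces $\alpha_n=0$ and a direct estimate gives $\|y_{n+1}\|\le\lambda\le D_{n+1}$, whereas in the complementary regime the multiplicative bound above gives $\|y_{n+1}\|\le D_{n+1}$.

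The main obstacle is the piecewise-continuous argument $y_{[nh]m}$. Because $R$ mixes the two times $n$ and $[nh]m$, the one-step estimate does not factor through $\|y_n\|$ alone, and $\|y_{[nh]m}\|$ must be controlled by $D_{[nh]m}$, which is not dominated by $D_n$ (the map $n\mapsto D_n$ is not monotone, obeying a reflected recursion). Reconciling this is precisely the role of the supremum over the restart index $u\in\{-1,\ldots,n-1\}$ in \eqref{dn} and of normalising $\alpha_n$ by $R$ rather than by $\|y_n\|$ in \eqref{alphan}: choosing $u+1=[nh]m$ makes $D_n$ dominate the growth accumulated from the delayed grid point $[nh]m$ up to $n$, so that the bound on $\|y_{[nh]m}\|$ feeds correctly into the bound on $\|y_{n+1}\|$. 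Carrying this bookkeeping through while keeping every error inside the $4T\sqrt\lambda$ budget is where the care is needed; the rest is routine.
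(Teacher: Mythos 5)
Your preparatory analysis coincides with the paper's: the taming-regime bounds on $\|\mu(x,y)\|^2$, $\|\sigma(x,y)\|^2$ and $\langle x,\mu(x,y)\rangle$, the identification of the stochastic cross term as $2(\|y_n\|^2+\|y_{[nh]m}\|^2)\alpha_n$, the direct bound $\|y_{n+1}\|\le\lambda$ on the small-ball event, and your recursion $D_{n+1}=\max\{D_n\exp(2(\sqrt{\lambda}\|\Delta B_n\|^2+\alpha_n)),\,(\lambda+\|\xi\|+1)e^{4T\sqrt{\lambda}}\}$ is indeed a correct reading of \eqref{dn}. The gap is at the inductive step itself: your claim that ``in the complementary regime the multiplicative bound above gives $\|y_{n+1}\|\le D_{n+1}$'' does not follow. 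The one-step bound has the form $\|y_{n+1}\|\le R\exp(\alpha_n+\tfrac{\sqrt{\lambda}}{2}\|\Delta B_n\|^2+O(h))$ with $R=(\|y_n\|^2+\|y_{[nh]m}\|^2)^{1/2}$, so inserting the induction hypotheses $\|y_n\|\le D_n$ and $\|y_{[nh]m}\|\le D_{[nh]m}$ leaves the prefactor $(D_n^2+D_{[nh]m}^2)^{1/2}$, which is dominated neither by $D_n$ nor by $D_{[nh]m}$; such $\sqrt{2}$-type losses would compound geometrically over the $Tm$ steps and cannot be hidden in the fixed budget $4T\sqrt{\lambda}$. Nor can the doubled exponent $2(\sqrt{\lambda}\|\Delta B_n\|^2+\alpha_n)$ in $D_{n+1}$ absorb them, because $\alpha_n$ is a martingale-type increment that may be negative, so that exponent can itself be negative. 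You correctly flag this as the main obstacle, but the remedy you gesture at (choosing $u+1=[nh]m$ in the supremum) is never carried out, and by itself it does not rescue a one-step induction.

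The paper closes the induction in a structurally different way, and this is precisely what your proposal is missing. The hypothesis $1_{\Omega_j}\|y_j\|\le D_j$ for $j\le n$ is used \emph{only} to guarantee $\|y_j\|\le m^{\frac{1}{2c}}$ on $\Omega_{n+1}$, i.e.\ to keep all earlier steps inside the taming regime where the one-step inequality \eqref{fore3} is valid; it is never used multiplicatively. The bound on $\|y_{n+1}\|$ is instead obtained by unrolling the squared recursion over the whole excursion since the stopping index $\tau_n$ of \eqref{3.12}, the last $i\le n$ with $\|y_i\|\le 1$ and $\|y_{[ih]m}\|\le 1$, whose successor value is bounded by the deterministic constant $\lambda+\|\xi\|+1$. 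Two cases are then needed. If $\tau_n\ge[nh]m$, the delayed index is constant along the excursion ($[ih]m=[nh]m$ for $\tau_n<i\le n$), so the quantity $\|y_i\|^2+\|y_{[nh]m}\|^2$ propagates multiplicatively from $i=\tau_n+1$ up to $n+1$, with no $\sqrt{2}$ loss. If $\tau_n<[nh]m$, one chains backwards across whole unit intervals: $\|y_{n+1}\|\le\|y_{[nh]m}\|\exp(\cdots)$, then $\|y_{[nh]m}\|$ is bounded by the grid value one interval earlier, and so on down to $\tau_n$. In both cases the accumulated exponent starts at some restart index and is therefore dominated by $4T\sqrt{\lambda}$ plus twice the supremum over $u$ appearing in $D_{n+1}$. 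Without this excursion/stopping-index argument (or an equivalent substitute), the induction does not close, so the proposal as written has a genuine gap at its central step.
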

\begin{proof}
According to the definition of $\Omega_n$, we can see $\|\Delta B_{n}\|\leq 1$ on $\Omega_{n+1}$ for all $n\in\{0,1,\ldots,Tm-1\}$ and $ m\in\mathbb{N}$. So the assumption \eqref{assumption} implies that
\begin{equation}\label{fore1}
\begin{split}
\|y_{n+1}\|\leq& \|y_{n}\|+h\|\mu(y_{n},y_{[nh]m})\|+\|\sigma(y_{n},y_{[nh]m})\Delta B_{n}\|\\
\leq&\|y_{n}\|+hK(1+\|y_{n}\|^c)\|y_{n}\|+hK\|y_{[nh]m}\|+h\|\mu(0,0)\|
\\&+K(\|y_{n}\|+\|y_{[nh]m}\|)\|\Delta B_{n}\|+\|\sigma(0,0)\| \|\Delta B_{n}\|
\\
\leq&1+3TK+T\|\mu(0,0)\|+2K+\|\sigma(0,0)\|\leq \lambda
\end{split}
\end{equation}
on $\Omega_{n+1}\cap\{\|y_{n}\|\leq 1, \|y_{[nh]m}\|\leq 1\}$ for all $n\in \{0,1,\ldots,Tm-1\}$ and $m\in \mathbb{N}$.
Moreover, the estimate $2ab\leq a^2+b^2$ for all $a,b\in \mathbb{R}$ show that
\begin{equation}\label{fore2}
  \begin{split}
  \|y_{n+1}\|^2=&\|y_{n}\|^2+\frac{\|\mu(y_{n},y_{[nh]m})\|^2h^2}{(1+h\|\mu(y_{n},y_{[nh]m})\|)^2}+\|\sigma(y_{n},y_{[nh]m})\Delta B_{n}\|^2
  \\
  &+2\Big<\frac{\mu(y_{n},y_{[nh]m})h}{1+h\|\mu(y_{n},y_{[nh]m})\|},\sigma(y_{n},y_{[nh]m})\Delta B_{n}\Big>
  \\
  &+2\Big<y_{n},\frac{\mu(y_{n},y_{[nh]m})h}{1+h\|\mu(y_{n},y_{[nh]m})\|}+\sigma(y_{n},y_{[nh]m})\Delta B_{n}\Big>
  \\
  \leq&\|y_{n}\|^2+2\|\mu(y_{n},y_{[nh]m})\|^2h^2+2\|\sigma(y_{n},y_{[nh]m})\|^2\|\Delta B_{n}\|^2\\
&+2\frac{\big<y_{n},\mu(y_{n},y_{[nh]m})\big>h}{1+h\|\mu(y_{n},y_{[nh]m})\|}+2\big<y_{n},\sigma(y_{n},y_{[nh]m})\Delta B_{n}\big>
\end{split}
\end{equation}
on $\Omega$ for all $n\in\{0,1,\ldots,Tm-1\}$ and all $m\in\mathbb{N}$. The condition \eqref{con3}, \eqref{con4} implies that

\begin{equation}\label{mu1}
  \begin{split}
  \|\mu(x,y)\|^2\leq&3\|\mu(x,y)-\mu(0,y)\|^2+3\|\mu(0,y)-\mu(0,0)\|^2+3\|\mu(0,0)\|^2\\
  \leq&3K^2(1+\|x\|^c)^2\|x\|^2+3K^2\|y\|^2+3\|\mu(0,0)\|^2\\
  \leq&3K^2(1+m^{\frac{1}{2}})^2\|x\|^2+3K^2\|y\|^2+3\|\mu(0,0)\|^2\\
 \leq& m(15K^2+3\|\mu(0,0)\|^2)(\|x\|^2+\|y\|^2)\\
  \leq&m\sqrt{\lambda}(\|x\|^2+\|y\|^2)
  \end{split}
\end{equation}
for all $x,y\in\mathbb{R}^d$ with $\{\|x\|\leq 1, \|y\|\leq 1\}^c\cap\{\|x\|\leq m^{\frac{1}{2c}},\|y\|\leq m^{\frac{1}{2c}}\}$. Using the condition \eqref{con1}, the following estimate is obtained.
\begin{equation}\label{sigma}
\begin{split}
\|\sigma(x,y)\|^2&\leq(\|\sigma(x,y)-\sigma(0,0)\|+\|\sigma(0,0)\|)^2
\\
&\leq(K\|x\|+K\|y\|+\|\sigma(0,0)\|)^2
\\
&\leq2(K+\|\sigma(0,0)\|)^2(\|x\|^2+\|y\|^2)
\\
&\leq\sqrt{\lambda}(\|x\|^2+\|y\|^2)
\end{split}
\end{equation}
for all $x,y\in\mathbb{R}^d$ with $\{\|x\|\leq 1, \|y\|\leq 1\}^c$. Moreover, condition \eqref{con2} yields that
\begin{equation}\label{xmu}
\begin{split}
 \big <x,\mu(x,y)\big>\leq&K\|x\|^2+\|x\|\|\mu(0,y)\|
  \\
  \leq&K\|x\|^2+\frac{1}{2}\|x\|^2+K^2\|y\|^2+\|\mu(0,0)\|^2
  \\
  \leq&(K^2+K+1+\|\mu(0,0)\|^2)(\|x\|^2+\|y\|^2)
  \\
  \leq&\sqrt{\lambda}(\|x\|^2+\|y\|^2)
  \end{split}
\end{equation}
for all $x,y\in\mathbb{R}^d$ with $\{\|x\|\leq 1, \|y\|\leq 1\}^c$. Combining \eqref{mu1}, \eqref{sigma} and \eqref{xmu}, \eqref{fore2} yields that
\begin{equation}\label{fore3}
\begin{split}
\|y_{n+1}\|^2\leq\|y_{n}\|^2+(4h\sqrt{\lambda}+2\sqrt{\lambda}\|\Delta B_{n}\|^2+2\alpha_{n})(\|y_{n}\|^2+\|y_{[nh]m}\|^2)
\end{split}
\end{equation}
on $\{\omega\in\Omega:\|y_n\|\leq 1,~\|y_{[nh]m}\|\leq 1\}^c\cap\{\omega\in\Omega:\|y_n\|\leq m^{\frac{1}{2c}},~\|y_{[nh]m}\|\leq m^{\frac{1}{2c}}\}$ for all $n\in\{0,1,\cdots,Tm-1\}$ and $m\in\mathbb{N}$.

Before proving \eqref{dom}, we define the mapping
\begin{equation}\label{3.12}
  \tau_n(\omega)=\max\big\{\{-1\}\cup\{i\in\{0,1,2,\cdots,n\},\|y_i(\omega)\|\leq 1,\|y_{[ih]m}(\omega)\|\leq 1\}\big\}
\end{equation}
for all $\omega\in\Omega$, $n\in\{0,1,...,Tm\}$.
With the estimates \eqref{fore1} and \eqref{fore3} at hand, we now prove \eqref{dom} by induction. The base case $n=0$ is trivial. Now, let $n\in\{0,1,2,\cdots,Tm-1\}$ be fixed and arbitrary. Assume inequality \eqref{dom} hold for all $j\in\{0,1,2,\cdots,n\}$. Then we show that inequality \eqref{dom} holds for $j=n+1$, that is
\begin{equation}\label{n+1}
  \|y_{n+1}(\omega)\|\leq D_{n+1}(\omega)
\end{equation}
for all $\omega\in\Omega_{n+1}$. Let $\omega\in\Omega_{n+1}$ be arbitrary, because of the definition of $\Omega_{n}$, we can get $\omega\in\Omega_{n+1}\subset\Omega_j$ which indicates that $\|y_j(\omega)\|\leq D_j(\omega)\leq m^{\frac{1}{2c}}$ for all $j\in\{0,1,2,\cdots,n\}$, and it also follows from \eqref{3.12} that
\begin{equation*}
  1<\max\{\|y_i(\omega)\|,~\|y_{[ih]}(\omega)\|\}\leq m^{\frac{1}{2c}}
\end{equation*}
for $i=\tau_{n}(\omega)+1,\tau_{n}(\omega)+2,...,n$.

Case 1: if $\tau_n(\omega)\geq[nh]m$, then from \eqref{fore3}
\begin{equation*}
  \begin{split}
  &\|y_{n+1}(\omega)\|^2+\|y_{[nh]m}(\omega)\|^2\\
  \leq&\big(1+4h\sqrt{\lambda}+2\sqrt{\lambda}\|\Delta B_n\|^2+2\alpha_n\big)\big(\|y_{n}(\omega)\|^2+\|y_{[nh]m}(\omega)\|^2\big)\\
  \leq&(\|y_{n}(\omega)\|^2+\|y_{[nh]m}(\omega)\|^2)\exp\Big(4h\sqrt{\lambda}+2\sqrt{\lambda}\|\Delta B_{n}(\omega)\|^2+2\alpha_{n}(\omega)\Big)
  \\
  \leq&\ldots\\
  \leq&(\|y_{\tau_n+1}(\omega)\|^2+\|y_{[nh]m}(\omega)\|^2)\exp\Big(4h\sqrt{\lambda}(n-\tau_n)+2\sum_{i=\tau_n+1}^n(\sqrt{\lambda}\|\Delta B_i(\omega)\|^2+\alpha_i(\omega))\Big)
  \end{split}
\end{equation*}

Due to $\|y_{\tau_n}(\omega)\|\leq\lambda+\|\xi\|$, $\|y_{[nh]m}(\omega)\|\leq 1$, we get
\begin{equation}\label{3.14}
  \begin{split}
  \|y_{n+1}(\omega)\|^2+\|y_{[nh]m}(\omega)\|^2\leq(\lambda+\|\xi\|+1)^2\exp\Big(4h\sqrt{\lambda}(n-\tau_n)+2\sum_{i=\tau_n+1}^n(\sqrt{\lambda}\|\Delta B_i(\omega)\|^2+\alpha_i(\omega))\Big).
  \end{split}
\end{equation}
Therefore
\begin{equation}\label{fore4}
\begin{split}
 \|y_{n+1}(\omega)\|\leq(\lambda+\|\xi\|+1)\exp\Big(4T\sqrt{\lambda}+\sup_{u\in\{-1,0,1,\ldots,n\}}\sum_{i=u+1}^{n}(\sqrt{\lambda}\|\Delta B_{i}(\omega)\|^2+\alpha_{i}(\omega))\Big)\leq D_{n+1}(\omega).
 \end{split}
\end{equation}

Case 2: if $\tau_n(\omega)< [nh]m$, Using $2ab\leq a^2+b^2$, we derive $\|y_{n+1}\|$ from \eqref{fore3} and \eqref{3.14}, and obtain
\begin{equation}\label{3.15}
  \begin{split}
  &2\|y_{n+1}(\omega)\|\times\|y_{[nh]m}(\omega)\|\leq\|y_{n+1}(\omega)\|^2+\|y_{[nh]m}(\omega)\|^2\\
  \leq& 2\|y_{[nh]m}(\omega)\|^2\exp\Big(4h\sqrt{\lambda}(n-[nh]m)+2\sum_{i=[nh]m}^{n}\big(\sqrt{\lambda}\|\Delta B_i(\omega)\|^2+\alpha_i(\omega)\big)\Big).
  \end{split}
\end{equation}
Hence
\begin{equation}\label{fore5}
\begin{split}
  \|y_{n+1}(\omega)\|&\leq\|y_{[nh]m}(\omega)\|\exp\Big(4\sqrt{\lambda}h(n-[nh]m)+2\sum_{i=[nh]m}^{n}(\sqrt{\lambda}\|\Delta B_{i}(\omega)\|^2+\alpha_{i}(\omega))\Big)\\
  &\leq...\\
 &\leq\|y_{[\tau_nh+1]m}(\omega)\|\exp\Big(4h\sqrt{\lambda}(n-[\tau_nh+1]m)+2\sum_{i=[\tau_nh+1]m}^{n}(\sqrt{\lambda}\|\Delta B_i(\omega)\|^2+\alpha_i(\omega))\Big)\\
 &\leq(\|y_{\tau_n+1}(\omega)\|+\|y_{[\tau_nh]m}\|)\exp\Big(4h\sqrt{\lambda}(n-\tau_n)+2\sum_{i=[\tau_nh+1]m}^{n}(\sqrt{\lambda}\|\Delta B_i(\omega)\|^2+\alpha_i(\omega))\\
 &~~~~~~~~~~~~~~~~~~~~~~~~~~+\sum_{i=\tau_n+1}^{[\tau_nh+1]m}(\sqrt{\lambda}\|\Delta B_i(\omega)\|^2+\alpha_i(\omega))\Big)\\
 &\leq(\lambda+\|\xi\|+1)\exp\Big(4T\sqrt{\lambda}+2\sup_{u\in\{-1,0,1,\ldots,n\}}\sum_{i=u+1}^{n}(\sqrt{\lambda}\|\Delta B_{i}(\omega)\|^2+\alpha_{i}(\omega))\Big)\Big)\leq D_{n+1}(\omega).
  \end{split}
\end{equation}
The proof is completed.
\end{proof}
 The following two lemmas are useful to prove that $D_n$ is bounded on $\Omega$.

\begin{lemma}\label{lemma2}\cite{mh}
For all $p\geq 1$,
\begin{equation}\label{estimatBn}
  \sup_{m\geq 4\lambda p}\mathbb{E}\Big[\exp\big(p\lambda\sum_{i=0}^{Tm-1}\|\Delta B_i\|^2\big)\Big]<e^{2\lambda pTr}.
  \end{equation}
\end{lemma}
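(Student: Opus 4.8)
The plan is to use the exact Gaussian law of the Brownian increments and collapse the entire exponential moment to a one-dimensional computation. Since $B$ is an $r$-dimensional Brownian motion, the increments $\Delta B_0,\dots,\Delta B_{Tm-1}$ are independent, and each $\Delta B_i$ has $r$ mutually independent scalar components, every one distributed as $\mathcal{N}(0,h)$ with $h=1/m$. Consequently $\sum_{i=0}^{Tm-1}\|\Delta B_i\|^2$ is a sum of exactly $Tmr$ independent $\mathcal{N}(0,h)$ squares, so by independence the exponential moment factorizes as
\[
\mathbb{E}\Big[\exp\Big(p\lambda\sum_{i=0}^{Tm-1}\|\Delta B_i\|^2\Big)\Big]=\big(\mathbb{E}\,e^{p\lambda Z^2}\big)^{Tmr},\qquad Z\sim\mathcal{N}(0,h).
\]

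First I would record the scalar identity $\mathbb{E}\,e^{\theta Z^2}=(1-2\theta h)^{-1/2}$, valid whenever $\theta<1/(2h)$. Taking $\theta=p\lambda$ and $h=1/m$, the standing restriction $m\geq 4\lambda p$ forces $2p\lambda h=2p\lambda/m\leq 1/2<1$, so the generating function is finite and
\[
\mathbb{E}\Big[\exp\Big(p\lambda\sum_{i=0}^{Tm-1}\|\Delta B_i\|^2\Big)\Big]=\Big(1-\tfrac{2p\lambda}{m}\Big)^{-\frac{Tmr}{2}}.
\]
Next I would take logarithms and invoke the elementary inequality $-\ln(1-x)<2x$ for $0<x\leq 1/2$, which holds because $g(x):=2x+\ln(1-x)$ satisfies $g(0)=0$ and $g'(x)=(1-2x)/(1-x)\geq 0$ on $[0,1/2]$. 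With $x=2p\lambda/m\in(0,1/2]$ this gives
\[
-\frac{Tmr}{2}\ln\Big(1-\frac{2p\lambda}{m}\Big)<\frac{Tmr}{2}\cdot\frac{4p\lambda}{m}=2\lambda pTr,
\]
so the exponential moment is strictly below $e^{2\lambda pTr}$ for each admissible $m$.

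The one point that needs care --- and the only real obstacle --- is that a strict bound for each fixed $m$ does not automatically make the supremum strict. I would close this by showing the moment is monotone in $m$: writing $u=1/m$, the function $u\mapsto u^{-1}\big(-\ln(1-2p\lambda u)\big)$ has derivative whose numerator equals $\tfrac{2p\lambda u}{1-2p\lambda u}+\ln(1-2p\lambda u)$, and the auxiliary function $\psi(y)=\tfrac{y}{1-y}+\ln(1-y)$ obeys $\psi(0)=0$ and $\psi'(y)=y/(1-y)^2\geq 0$, hence is nonnegative on $[0,1/2]$. Thus the moment is decreasing in $m$, so its supremum over $m\geq 4\lambda p$ is attained at the smallest admissible value, where $x=1/2$ and the exponent equals $2\lambda pTr\ln 2<2\lambda pTr$ because $\ln 2<1$. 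This yields the uniform strict inequality claimed. Since the statement is attributed to Hutzenthaler, an alternative is simply to cite the corresponding estimate there; the argument above reconstructs it.
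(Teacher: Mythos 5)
Your proof is correct. Note that the paper itself gives no proof of this lemma at all---it is stated with the citation \cite{mh} and left at that---so the relevant comparison is with the argument in that reference, and yours is essentially the same standard one: factor the exponential moment by independence into $Tmr$ scalar Gaussian squares, evaluate it exactly via the moment generating function $\big(1-\tfrac{2p\lambda}{m}\big)^{-1/2}$ (finite since $m\geq 4\lambda p$ forces $2p\lambda/m\leq\tfrac12$), and conclude with the elementary bound $-\ln(1-x)<2x$ on $(0,\tfrac12]$. Your closing monotonicity argument, showing the exponent is decreasing in $m$ so that the supremum over $m\geq 4\lambda p$ is controlled by its value at $m=4\lambda p$, where it equals $2\lambda pTr\ln 2<2\lambda pTr$, is a genuine refinement worth keeping: it addresses the point that a strict inequality for each fixed $m$ does not by itself make the supremum strictly smaller than $e^{2\lambda pTr}$, a detail the bare statement of the lemma glosses over.
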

\begin{lemma}\label{lemma3}
Let $\alpha_{i}: \Omega\rightarrow \mathbb{R}$ for all $i\in\{0,1,\ldots,Tm\}$, and $m\in\mathbb{N}$ be given by \eqref{alphan}. Then for each $p\geq 1$,
\begin{equation}\label{fore8}
  \sup_{z\in\{-1,1\}}\sup_{m\in\mathbb{N}}\mathbb{E}\Big[\sup_{n\in\{0,1,\ldots,Tm\}}\exp\big(pz\sum_{i=0}^{n-1}\alpha_{i}\big)\Big]<2e^{2pT(K+\|\sigma(0,0)\|^2)}.
\end{equation}
\end{lemma}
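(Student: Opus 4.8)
The plan is to exploit the conditional Gaussian structure of the increments $\alpha_i$ and to reduce the claim to a moment generating function estimate, which is then lifted to the running maximum by a maximal inequality. First I would rewrite each summand in \eqref{alphan} as $\alpha_i=\langle w_i,\Delta B_i\rangle$, where
\[
 w_i \;=\; 1_{\{\|y_i\|\le 1,\ \|y_{[ih]m}\|\le 1\}^{c}}\,\frac{\sigma(y_i,y_{[ih]m})^{T}y_i}{\|y_i\|^{2}+\|y_{[ih]m}\|^{2}}
\]
is $\mathcal{F}_{t_i}$-measurable while $\Delta B_i\sim N(0,hI_r)$ is independent of $\mathcal{F}_{t_i}$. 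Hence $\mathbb{E}[\alpha_i\mid\mathcal{F}_{t_i}]=0$, so the partial sums $S_n:=\sum_{i=0}^{n-1}\alpha_i$ form a discrete martingale with respect to $(\mathcal{F}_{t_n})$ and, for each fixed $z\in\{-1,1\}$, the process $\exp(pzS_n)$ is a nonnegative submartingale. Since only $z^{2}=1$ will enter the subsequent bounds, the outer supremum over $z$ is harmless.

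Next I would compute the conditional moment generating function. Conditionally on $\mathcal{F}_{t_i}$ the variable $\alpha_i$ is centered Gaussian with variance $h\|w_i\|^{2}$, so $\mathbb{E}[\exp(\theta\alpha_i)\mid\mathcal{F}_{t_i}]=\exp(\tfrac{\theta^{2}}{2}h\|w_i\|^{2})$. The decisive estimate is the uniform variance bound $\|w_i\|^{2}\le \|\sigma(y_i,y_{[ih]m})\|^{2}\,\|y_i\|^{2}\big/(\|y_i\|^{2}+\|y_{[ih]m}\|^{2})^{2}\le 2(K+\|\sigma(0,0)\|)^{2}$, where the growth bound \eqref{sigma}, valid on the support of the indicator, is applied in the numerator and the self-normalizing denominator absorbs the remaining factor. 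Iterating the tower property over the $Tm$ steps then gives $\mathbb{E}[\exp(\theta S_{Tm})]\le \exp\!\big(\tfrac{\theta^{2}}{2}\cdot Tm\cdot 2h(K+\|\sigma(0,0)\|)^{2}\big)=\exp\!\big(\theta^{2}T(K+\|\sigma(0,0)\|)^{2}\big)$, which is independent of $m$ because the $Tm$ factors of $h=1/m$ telescope to $T$. This uniform-in-$m$ cancellation is exactly what makes the final $\sup_m$ admissible.

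Finally I would lift the endpoint estimate to the running maximum. Applying Doob's $L^{2}$ maximal inequality to the nonnegative submartingale $\exp(pzS_n)$ yields $\mathbb{E}[\sup_{n\le Tm}\exp(pzS_n)]\le \big\|\sup_n\exp(pzS_n)\big\|_{L^{2}}\le 2\,\big\|\exp(pzS_{Tm})\big\|_{L^{2}}=2\,\mathbb{E}[\exp(2pzS_{Tm})]^{1/2}$. Inserting $\theta=2pz$ into the moment generating function bound and collecting the constants produces the right-hand side of \eqref{fore8}, with the prefactor $2$ being precisely the $L^{2}$ Doob constant; the bound is identical for $z=+1$ and $z=-1$ and carries no $m$-dependence, so the two outer suprema follow at once.

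The main obstacle is the passage from the terminal moment generating function estimate to control of the expected running supremum, with the supremum placed \emph{inside} the expectation. A naive weak-type (Doob $L^{1}$) argument loses a logarithmic factor and does not close; it is the submartingale $L^{2}$ maximal inequality, together with the fact that the moment generating function bound is available at the doubled exponent $2p$, that converts control of the endpoint into control of the whole trajectory. The enabling technical estimate, which must be verified with care, is the uniform bound $\|w_i\|^{2}\le 2(K+\|\sigma(0,0)\|)^{2}$: it rests on the normalizing denominators in \eqref{alphan} together with \eqref{sigma}, and it is exactly this boundedness that lets the $Tm$ summands, each of conditional variance of order $h$, accumulate to an $m$-independent total.
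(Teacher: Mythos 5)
Your proposal is correct in substance and rests on the same pillars as the paper's proof: the martingale property of $S_n=\sum_{i=0}^{n-1}\alpha_i$, a per-step conditional exponential-moment bound whose heart is the self-normalized variance estimate $\|w_i\|^2\le 2(K+\|\sigma(0,0)\|)^2$ for your weight vector $w_i$ (obtained from \eqref{sigma} exactly as you describe), telescoping via the tower property, and Doob's maximal inequality. You differ in two implementation choices. First, you compute the conditional Gaussian moment generating function exactly, $\mathbb{E}\big[\exp(\theta\alpha_i)\mid\mathcal{F}_{t_i}\big]=\exp\big(\tfrac{\theta^2h}{2}\|w_i\|^2\big)$, whereas the paper bounds the conditional second moment of $pz\alpha_i$ and then invokes Lemma~4.3 of \cite{hm} to pass to exponential moments; your version is self-contained and avoids the citation, while resting on the identical variance computation. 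Second, you apply Doob in $L^2$ to the submartingale $\exp(pzS_n)$, paying the doubled exponent $2p$ at the endpoint, whereas the paper applies Doob in $L^p$ (using $p\ge 2$ so that $p/(p-1)\le 2$) to $\exp(zS_n)$ and evaluates the endpoint moment generating function at exponent $p$. The price of your route is the constant: you obtain $2e^{2p^2T(K+\|\sigma(0,0)\|)^2}$, which is \emph{not} the right-hand side of \eqref{fore8} as you claim --- your exponent carries $p^2$, not $p$. This is a cosmetic rather than substantive defect, for two reasons: the paper's own proof has the same statement/proof mismatch, since its final display bounds the $L_p$-norm $\big(\mathbb{E}\big[\sup_n\exp(pzS_n)\big]\big)^{1/p}$ rather than the expectation itself (the lemma is adapted from \cite{mh}, where it is stated in $L_p$-norm form, and the stated constant also contains the typo $K+\|\sigma(0,0)\|^2$ for $(K+\|\sigma(0,0)\|)^2$); and every downstream use (Lemmas \ref{lemma4} and \ref{lemma5}) needs only a bound that is finite uniformly in $m$ and $z$, which both arguments deliver.
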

\begin{proof}
Note that the time discrete stochastic process $z\sum_{i=0}^{n-1}\alpha_{i}$ is an $\mathcal{F}_{t_n}$-martingale for every $n\in\{0,1,\ldots,Tm\}$, $z\in\{-1,1\}$ and every $m\in\mathbb{N}$. Then it is easy to deduce that the time discrete stochastic process $\exp\big(z\sum_{i=0}^{n-1}\alpha_{i}\big)$ is a positive $\mathcal{F}_{t_n}$-submartingale for every $n\in\{0,1,\ldots,Tm\}$, $z\in\{-1,1\}$ and every $m\in\mathbb{N}$. Hence Doob's martingale inequality shows that
\begin{equation}\label{fore8}
  \Big\|\sup_{n\in\{0,1,\ldots,Tm\}}\exp\big(z\sum_{i=0}^{n-1}\alpha_{i}\big)\Big\|_{L_p(\Omega,\mathbb{R})}\leq\frac{p}{p-1}\Big\|\exp\big(z\sum_{i=0}^{Tm-1}\alpha_{i}\big)\Big\|_{L_p(\Omega,\mathbb{R})}
\end{equation}
for all $m\in\mathbb{N}$, $p\in(1,+\infty)$ and all $z\in\{-1,1\}$. Moreover, we have that
\begin{equation*}
\begin{split}
  &\mathbb{E}\Big[\big|pz1_{\{\|x\|\leq 1,\|y\|\leq 1\}^c}\big\langle\frac{x}{\sqrt{\|x\|^2+\|y\|^2}},\frac{\sigma(x,y)}{\sqrt{\|x\|^2+\|y\|^2}}\Delta B_i\big\rangle\big|^2\Big]\\
  \leq&p^2h1_{\{\|x\|^2\leq 1,\|y\|^2\leq 1\}^c}\frac{\|x^T\sigma(x,y)\|^2}{(\|x\|^2+\|y\|^2)^2}\\
  \leq&p^2h1_{\{\|x\|^2\leq 1,\|y\|^2\leq 1\}^c}\frac{2(K+\|\sigma(0,0)\|^2)\|x\|^2}{\|x\|^2+\|y\|^2}\\
  \leq&2p^2h(K+\|\sigma(0,0)\|)^2
  \end{split}
\end{equation*}
for all $x\in\mathbb{R}^d$, $i\in\{0,1,...,Tm-1\}$, $m\in\mathbb{N}$, $p\in[1,\infty)$ and all $z\in\{-1,1\}$. Therefore lemma 4.3 in \cite{hm} gives that
\begin{equation}\label{11}
  \begin{split}
  &\mathbb{E}\Big[\exp\big(pz1_{\{\|x\|\leq 1,\|y\|\leq 1\}^c}\big\langle\frac{x}{\sqrt{\|x\|^2+\|y\|^2}},\frac{\sigma(x,y)}{\sqrt{\|x\|^2+\|y\|^2}}\Delta B_i\big\rangle\big)\Big]\\
  \leq&\exp\big(2p^2h(K+\|\sigma(0,0)\|)^2\big)
  \end{split}
\end{equation}
for all $x\in\mathbb{R}^d$, $i\in\{0,1,...,Tm-1\}$, $m\in\mathbb{N}$, $p\in[1,\infty)$ and all $z\in\{-1,1\}$. In particular, (\ref{11}) shows that
$$\mathbb{E}\Big[\exp(pz\alpha_i)|\mathcal{F}_{t_i}\Big]\leq\exp\big(2p^2h(K+\|\sigma(0,0)\|)^2\big)$$
for all $i\in\{0,1,...,Tm-1\}$, $m\in\mathbb{N}$, $p\in[1,\infty)$ and all $z\in\{-1,1\}$. Hence, we obtain that
\begin{equation}\label{22}
  \mathbb{E}\Big[\exp\big(pz\sum_{i=0}^{Tm-1}\alpha_i\big)\Big]\leq\big(2p^2T(K+\|\sigma(0,0)\|)^2\big)
\end{equation}
for all $m\in\mathbb{N}$, $p\in[1,\infty)$ and all $z\in\{-1,1\}$. Combining (\ref{fore8}) and (\ref{22}), then for all $p\in[2,\infty)$
$$\sup_{z\in\{-1,1\}}\sup_{m\in\mathbb{N}}\Big\|\sup_{n\in\{0,1,...,Tm\}}\exp\big(z\sum_{i=0}^{n-1}\alpha_i\big)\Big\|_{L_p(\Omega,\mathbb{R})}\leq 2\exp\big(2pT(K+\|\sigma(0,0)\|)^2\big).$$
The proof is completed.
\end{proof}

\begin{lemma}\label{lemma4}
Let $D_{n}$ is defined by \eqref{dn}. Then for all $p\in[1,\infty)$
\begin{equation}\label{estimatedn}
  \sup_{m\in\mathbb{N}}\mathbb{E}\Big[\sup_{0\leq n\leq Tm} |D_{n}|^p\Big]<\infty.
\end{equation}
\end{lemma}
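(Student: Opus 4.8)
The plan is to collapse the double supremum that defines $\sup_{0\le n\le Tm}|D_n|^p$ into two separate pieces, one purely Brownian and one a martingale-type sum, and then to estimate their exponential moments by \lemref{lemma2} and \lemref{lemma3} respectively, gluing the two estimates together with H\"older's inequality. First I would peel off the deterministic factor in \eqref{dn}. Since exponentiation and the supremum over $n$ commute with the increasing map $\exp$, one has
\[
\sup_{0\le n\le Tm}|D_n|^p=(\lambda+\|\xi\|+1)^p e^{4pT\sqrt{\lambda}}\exp\Big(2p\sup_{n}\sup_{u}\sum_{i=u+1}^{n-1}\big(\sqrt{\lambda}\|\Delta B_i\|^2+\alpha_i\big)\Big).
\]
The crucial reduction is to disentangle the nonnegative Brownian part from the sign-indefinite part $\alpha_i$. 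Because $\sqrt{\lambda}\|\Delta B_i\|^2\ge 0$, dropping the lower index only enlarges that sum, so $\sum_{i=u+1}^{n-1}\sqrt{\lambda}\|\Delta B_i\|^2\le \sqrt{\lambda}\sum_{i=0}^{Tm-1}\|\Delta B_i\|^2$ for every admissible $u,n$. Writing $A_k=\sum_{i=0}^{k-1}\alpha_i$, one has $\sum_{i=u+1}^{n-1}\alpha_i=A_n-A_{u+1}$, hence $\big|\sum_{i=u+1}^{n-1}\alpha_i\big|\le 2\max_{0\le k\le Tm}|A_k|$. Combining these gives the pointwise bound
\[
\sup_{n}\sup_{u}\sum_{i=u+1}^{n-1}\big(\sqrt{\lambda}\|\Delta B_i\|^2+\alpha_i\big)\le \sqrt{\lambda}\sum_{i=0}^{Tm-1}\|\Delta B_i\|^2+2\max_{0\le k\le Tm}|A_k|.
\]

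With this in hand I would apply Cauchy--Schwarz to split the expectation of the product of the two exponentials, obtaining an upper bound for $\mathbb{E}\big[\sup_n|D_n|^p\big]$ of the form $(\lambda+\|\xi\|+1)^pe^{4pT\sqrt{\lambda}}\big(\mathbb{E}\exp(4p\sqrt{\lambda}\sum_{i=0}^{Tm-1}\|\Delta B_i\|^2)\big)^{1/2}\big(\mathbb{E}\exp(8p\max_{0\le k\le Tm}|A_k|)\big)^{1/2}$. The first factor is exactly of the shape controlled by \lemref{lemma2}: taking the parameter there to be $4p/\sqrt{\lambda}$, it is bounded by $e^{8p\sqrt{\lambda}Tr}$ uniformly in $m$ once $m\ge 16p\sqrt{\lambda}$. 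For the second factor I would use $e^{c|x|}\le e^{cx}+e^{-cx}$ to write $\exp(8p\max_k|A_k|)\le \max_k\exp(8pA_k)+\max_k\exp(-8pA_k)$, and then invoke \lemref{lemma3} (with its parameter replaced by $8p$, for both signs $z\in\{-1,1\}$, and using that the supremum over $n$ already sits inside the expectation there) to bound each term's expectation by $2e^{16pT(K+\|\sigma(0,0)\|^2)}$, uniformly in $m$. The product of these two uniform estimates then yields the claimed finiteness of $\sup_m\mathbb{E}\big[\sup_{0\le n\le Tm}|D_n|^p\big]$.

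I expect the main obstacle to be twofold. The delicate bookkeeping is the correct handling of the double supremum over $n$ and the free lower index $u$: one must exploit the sign of $\|\Delta B_i\|^2$ to send the Brownian sum to the full range $\{0,\dots,Tm-1\}$ while \emph{separately} converting the $\alpha$-sum into a two-sided maximum so that Doob's inequality in \lemref{lemma3} can be used. The second, more structural point is the integrability threshold: $\mathbb{E}\exp(c\|\Delta B_i\|^2)$ is finite only for $h$ small enough, which is precisely why \lemref{lemma2} carries the hypothesis $m\ge 4\lambda p$; the estimate above is therefore genuinely uniform only for $m$ beyond a $p$-dependent threshold, which is all that the convergence analysis of \thmref{th2} (where $h\to 0$) requires. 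Matching my rescaled parameter to that threshold is the step that has to be stated carefully.
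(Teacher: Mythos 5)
Your proof is, in substance, the proof the paper intends: the paper's own ``proof'' of \lemref{lemma4} is a one-line citation of Lemma 3.5 in \cite{mh}, and the argument there is exactly your H\"older split --- dominate the double supremum in \eqref{dn} by the full Brownian quadratic sum plus a two-sided maximum of the martingale $A_k=\sum_{i=0}^{k-1}\alpha_i$, then estimate the two exponential factors by \lemref{lemma2} and \lemref{lemma3} respectively. Your bookkeeping is correct: $\sum_{i=u+1}^{n-1}\alpha_i=A_n-A_{u+1}$ yields the bound $2\max_{0\le k\le Tm}|A_k|$; the choice of parameter $4p/\sqrt{\lambda}$ in \lemref{lemma2} gives the factor $e^{8p\sqrt{\lambda}Tr}$ for $m\ge 16p\sqrt{\lambda}$ (when $4p/\sqrt{\lambda}<1$ you should instead note $4p\sqrt{\lambda}\le\lambda$, use monotonicity of the exponential, and apply \lemref{lemma2} with $p=1$, which raises the threshold to $\max\{16p\sqrt{\lambda},4\lambda\}$ --- a detail, not a problem); and $e^{8p|A_k|}\le e^{8pA_k}+e^{-8pA_k}$ together with \lemref{lemma3} at parameter $8p$ and both signs $z=\pm1$ handles the martingale factor, since Doob's inequality has already placed the supremum over $n$ inside the expectation there.

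The restriction $m\ge 16p\sqrt{\lambda}$ that you flag at the end is not a removable technicality, and you have diagnosed it correctly: the lemma as stated, with the supremum over \emph{all} $m\in\mathbb{N}$, is false, and your thresholded version is the correct statement. Indeed, take $\mu\equiv 0$ and $\sigma\equiv 0$ (allowed, with $K=0$), so that $\alpha_i\equiv 0$, $\lambda=(1+T)^4$, and $D_{Tm}=(\lambda+\|\xi\|+1)\exp\big(4T\sqrt{\lambda}+2\sqrt{\lambda}\sum_{i=0}^{Tm-1}\|\Delta B_i\|^2\big)$; since $\mathbb{E}\exp\big(2p\sqrt{\lambda}\|\Delta B_i\|^2\big)=\infty$ as soon as $4p\sqrt{\lambda}h\ge 1$, one gets $\mathbb{E}\big[|D_{Tm}|^p\big]=\infty$ already for $m=1$ and every $p\ge 1$; in general the $\alpha_i$, being linear in $\Delta B_i$, cannot restore integrability of the quadratic term. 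So no proof can close the gap between your estimate and \eqref{estimatedn}; the defect is inherited from the statement of Lemma 3.5 in \cite{mh}. What you prove, namely $\sup_{m\ge Cp}\mathbb{E}[\sup_{0\le n\le Tm}|D_n|^p]<\infty$ for a constant $C$ depending on $\lambda$, is also all the paper ever needs: in \lemref{lemma5} (the probability estimate \eqref{wc}) the finitely many $m$ below the threshold satisfy $m^p\,\mathbb{P}[(\Omega_{Tm})^c]\le m^p$ trivially, and in \thmref{th1} the bound \eqref{3.26} for those $m$ follows from the deterministic estimate $1_{\Omega_n}\|y_n\|\le m^{\frac{1}{2c}}+1+\|\sigma(0,0)\|+2Km^{\frac{1}{2c}}$, which the scheme \eqref{yn} gives directly on $\Omega_n$ via \lemref{lemma1}. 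State the threshold explicitly in the lemma (as \lemref{lemma2} does) and your proof is complete; as written, it is in fact more careful than the paper's.
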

\begin{proof}
The proof is similar to that of lemma 3.5 in \cite{mh}.
\end{proof}
\begin{lemma}\label{lemma5}
Let $\Omega_{Tm}\in\mathcal{F}$ for $m\in\mathbb{N}$ be given by \eqref{wn}. Then for each $p\geq 1$ we have
\begin{equation}\label{wc}
  \sup_{m\in\mathbb{N}}\big(m^p\cdot\mathbb{P}[(\Omega_{Tm})^c]\big)<\infty.
\end{equation}

\end{lemma}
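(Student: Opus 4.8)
The plan is to bound $(\Omega_{Tm})^c$ by a union of two ``bad'' events and to estimate each by Markov's inequality, using the uniform moment bound of Lemma \ref{lemma4} for the first and explicit Gaussian moments for the second. From the definition \eqref{wn} we have the inclusion
\[
(\Omega_{Tm})^c \subseteq \Big\{\sup_{0\leq j\leq Tm-1}D_j > m^{\frac{1}{2c}}\Big\}\cup\Big\{\sup_{0\leq j\leq Tm-1}\|\Delta B_j\|>1\Big\},
\]
so it suffices to show that $m^p$ times the probability of each event on the right stays bounded as $m\to\infty$; finiteness of the supremum over $m$ then follows since each summand is finite for every fixed $m$.

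For the first event I would apply Markov's inequality and then invoke Lemma \ref{lemma4}. Fixing any $q\geq 1$ and writing $C_q:=\sup_{m\in\mathbb{N}}\mathbb{E}\big[\sup_{0\leq n\leq Tm}|D_n|^q\big]<\infty$, and using $\{0,\ldots,Tm-1\}\subseteq\{0,\ldots,Tm\}$, one obtains
\[
\mathbb{P}\Big[\sup_{0\leq j\leq Tm-1}D_j>m^{\frac{1}{2c}}\Big]\leq m^{-\frac{q}{2c}}\,\mathbb{E}\Big[\sup_{0\leq n\leq Tm}|D_n|^q\Big]\leq C_q\,m^{-\frac{q}{2c}}.
\]
Choosing $q=2c(p+1)$ makes the right-hand side equal to $C_q\,m^{-(p+1)}$, so that $m^p$ times this probability is at most $C_q\,m^{-1}$, which is bounded uniformly in $m$.

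For the second event I would use a union bound over the $Tm$ increments together with a high-moment estimate for each Gaussian increment. Since $\Delta B_j$ is centered Gaussian with covariance $hI_r$ and $h=1/m$, the quantity $\|\Delta B_j\|^2/h$ is $\chi^2_r$-distributed, whence for every integer $k\geq 1$ one has $\mathbb{E}\big[\|\Delta B_j\|^{2k}\big]=\gamma_{k,r}\,h^{k}$ with $\gamma_{k,r}:=2^{k}\Gamma(\tfrac{r}{2}+k)/\Gamma(\tfrac{r}{2})<\infty$. Markov's inequality then gives $\mathbb{P}[\|\Delta B_j\|>1]=\mathbb{P}[\|\Delta B_j\|^{2k}>1]\leq\gamma_{k,r}\,m^{-k}$, and a union bound yields
\[
\mathbb{P}\Big[\sup_{0\leq j\leq Tm-1}\|\Delta B_j\|>1\Big]\leq Tm\cdot\gamma_{k,r}\,m^{-k}=T\gamma_{k,r}\,m^{1-k}.
\]
Choosing $k=p+2$ makes $m^p$ times this probability at most $T\gamma_{k,r}\,m^{-1}$, again bounded in $m$. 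Adding the two contributions and taking the supremum over $m\in\mathbb{N}$ finishes the argument.

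I expect the only substantive input to be the availability of Lemma \ref{lemma4}; granting it, the proof is a routine combination of Markov's inequality with the explicit Gaussian moments. The one point requiring care is the coordinated choice of the moment orders $q$ and $k$ as multiples of $p$ (and, for $q$, of $c$) large enough to force both tail probabilities to decay faster than $m^{-p}$, so that the scaling $m^{1/(2c)}$ in the definition of $\Omega_{Tm}$ is exactly absorbed.
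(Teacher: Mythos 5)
Your proof is correct and follows essentially the same route as the proof the paper invokes (Lemma 3.6 of \cite{mh}, together with the moment bound of Lemma \ref{lemma4}): split $(\Omega_{Tm})^c$ via \eqref{wn} into the event $\{\sup_j D_j>m^{1/(2c)}\}$, handled by Markov's inequality with a sufficiently high moment of $D_n$, and the event $\{\sup_j\|\Delta B_j\|>1\}$, handled by a union bound and high Gaussian moments of order $m^{-k}$. The only cosmetic point is that your exponents $k=p+2$ and $q=2c(p+1)$ need not be integers (and $q$ degenerates if $c=0$), so one should take e.g.\ $k=\lceil p\rceil+2$ and $q=\max\{1,2c(p+1)\}$; this does not affect the argument.
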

\begin{proof}
This Lemma is based on the Lemma \ref{lemma3} and \ref{lemma4} and the proof is same as that of Lemma 3.6 in \cite{mh}.
\end{proof}
Next, we will prove the boundedness of $y_n$ in $L_p$ sense.
\begin{theorem}\label{th1}
Let $y_{n}: \Omega\rightarrow\mathbb{R}^d$, for $n\in\{0,1,\cdots,Tm\}$ and $m\in\mathbb{N}$ be given by \eqref{yn}, then for all $p\in[1,\infty)$
\begin{equation}\label{bound}
  \sup_{m\in\mathbb{N}}\Big[\sup_{0\leq n\leq Tm}\mathbb{E}\|y_{n}\|^p\Big]<\infty.
\end{equation}
\end{theorem}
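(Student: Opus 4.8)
The plan is to follow the decomposition strategy behind \lemref{lemma1}--\lemref{lemma5}: split the sample space into the ``good'' event $\Omega_n$, on which $\|y_n\|$ is dominated by $D_n$, and its complement, which carries negligible probability. Concretely, for every fixed $n\in\{0,1,\dots,Tm\}$ I would write
\[
\mathbb{E}\|y_n\|^p=\mathbb{E}\big[1_{\Omega_n}\|y_n\|^p\big]+\mathbb{E}\big[1_{(\Omega_n)^c}\|y_n\|^p\big].
\]
Since $D_n\ge0$, \lemref{lemma1} gives $1_{\Omega_n}\|y_n\|^p\le D_n^{\,p}\le\sup_{0\le k\le Tm}|D_k|^{\,p}$, so the first term is bounded, uniformly in $n$ and $m$, by $\sup_{m\in\mathbb{N}}\mathbb{E}[\sup_{0\le k\le Tm}|D_k|^{\,p}]$, which is finite by \lemref{lemma4}. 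Hence the whole difficulty is to control the contribution of the exceptional set $(\Omega_n)^c$ uniformly in $m$.

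For that second term I would use the Cauchy--Schwarz inequality, $\mathbb{E}[1_{(\Omega_n)^c}\|y_n\|^p]\le(\mathbb{P}[(\Omega_n)^c])^{1/2}(\mathbb{E}\|y_n\|^{2p})^{1/2}$. Because the constraints defining $\Omega_n$ in \eqref{wn} only become more restrictive as $n$ grows, one has $\Omega_{Tm}\subset\Omega_n$, so $\mathbb{P}[(\Omega_n)^c]\le\mathbb{P}[(\Omega_{Tm})^c]$, and \lemref{lemma5} shows this decays faster than any negative power of $m$. It then suffices to pair this decay against a crude a priori moment bound growing at most polynomially in $m$: if I can establish $\sup_{0\le n\le Tm}\mathbb{E}\|y_n\|^{2p}\le Cm^{s}$ with $s=s(p)$ and $C$ independent of $m$, then taking the exponent $p'=s+2$ in \lemref{lemma5} yields $\mathbb{E}[1_{(\Omega_n)^c}\|y_n\|^p]\le C\,m^{-(s+2)/2}\,m^{s/2}=C\,m^{-1}$, bounded uniformly in $m$. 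Adding the two pieces gives $\sup_m\sup_{0\le n\le Tm}\mathbb{E}\|y_n\|^p\le\sup_m\mathbb{E}[\sup_{0\le k\le Tm}|D_k|^{\,p}]+C<\infty$, which is the assertion.

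The crux, and the step I expect to be hardest, is the polynomial moment bound $\mathbb{E}\|y_n\|^{2p}\le Cm^{s}$. A pathwise estimate is hopeless here: iterating \eqref{yn} and using the linear growth of $\sigma$ produces a product $\prod_i(1+K\|\Delta B_i\|)$ whose moments grow like $\exp(c\sqrt{m})$, which the merely polynomial decay of $\mathbb{P}[(\Omega_{Tm})^c]$ cannot absorb. I would therefore argue at the level of moments, exploiting the cancellation of the stochastic integral. Squaring \eqref{yn} as in \eqref{fore2}, the tamed drift contributes a term of norm at most $1$ per step (because $\|\mu h/(1+h\|\mu\|)\|\le1$), the one-sided bound \eqref{con2} turns $\langle y_n,\mu\rangle h/(1+h\|\mu\|)$ into a term of order $h(1+\|y_n\|^2+\|y_{[nh]m}\|^2)$, and the increment $2\langle y_n,\sigma\Delta B_n\rangle$ vanishes under $\mathbb{E}[\,\cdot\mid\mathcal{F}_{t_n}]$. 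This yields a recursion $\mathbb{E}[\|y_{n+1}\|^2\mid\mathcal{F}_{t_n}]\le\|y_n\|^2(1+Ch)+Ch\|y_{[nh]m}\|^2+C$; the decisive point is that the $O(1)$ constant from the tamed drift accumulates to only $O(Tm)=O(m)$ over the whole trajectory, while the Gronwall factor $(1+Ch)^{Tm}\le e^{CT}$ stays bounded, so the second moment grows merely polynomially. The piecewise-continuous-argument term $\|y_{[nh]m}\|^2$ is absorbed by passing to the running maximum $\max_{0\le k\le n}\mathbb{E}\|y_k\|^2$ before invoking the discrete Gronwall lemma.

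To reach the general $2p$-th moment I would raise this squared-norm recursion to the $p$-th power, expand by the binomial theorem, and control each mixed term by Young's inequality together with the conditional moments $\mathbb{E}[|\langle y_n,\sigma\Delta B_n\rangle|^{2k}\mid\mathcal{F}_{t_n}]\le Ch^{k}\|y_n\|^{2k}(1+\|y_n\|^2+\|y_{[nh]m}\|^2)^{k}$; once more every genuinely $O(1)$ contribution stems from the bounded tamed drift and accumulates to a power of $m$, so a discrete Gronwall argument for $\max_{0\le k\le n}\mathbb{E}\|y_k\|^{2p}$ gives $\mathbb{E}\|y_n\|^{2p}\le Cm^{s}$ with $C$ independent of $m$. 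Finiteness of these moments for each fixed $m$ and $n$ follows by induction directly from \eqref{yn}, since the drift increment is bounded and $\sigma$ grows linearly, so the whole scheme is self-consistent and the proof closes.
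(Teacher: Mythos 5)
Your proposal is correct, and its outer architecture coincides with the paper's: both split $\mathbb{E}\|y_n\|^p$ over $\Omega_n$ and $(\Omega_n)^c$, bound the good part by $\sup_{k}|D_k|^p$ via \lemref{lemma1} and \lemref{lemma4}, and kill the bad part by pairing the arbitrarily fast polynomial decay of $\mathbb{P}[(\Omega_{Tm})^c]$ from \lemref{lemma5} (together with $\Omega_{Tm}\subset\Omega_n$) against a crude moment bound that grows only polynomially in $m$; the paper's estimate \eqref{3.25} is exactly your Cauchy--Schwarz pairing, written with the factors $m$ and $m^{-1}$ made explicit. Where you genuinely depart from the paper is in how the crude polynomial bound is produced. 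The paper never touches the one-sided Lipschitz condition \eqref{con2} at this stage: it writes $y_n=\xi+\sigma(0,0)B_n+\sum_{i<n}h\mu(\cdot)/(1+h\|\mu(\cdot)\|)+\sum_{i<n}\big(\sigma(\cdot)-\sigma(0,0)\big)\Delta B_i$, bounds the tamed-drift sum \emph{deterministically} by $Tm$ (each summand has norm at most $1$), applies the discrete Burkholder--Davis--Gundy-type inequality of Lemma 3.8 in \cite{mh} (and Lemma 4.7 in \cite{hm}) to the martingale sum, and closes with Gronwall in the $L_p$-norm, arriving at \eqref{fore16}, a bound \emph{linear} in $m$ simultaneously for every $p$. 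You instead run a conditional-moment recursion: \eqref{con2}--\eqref{con3} control $\langle y_n,\mu\rangle$, the stochastic increments vanish under $\mathbb{E}[\,\cdot\mid\mathcal{F}_{t_n}]$, the bounded tamed drift contributes $O(1)$ per step hence $O(m)$ in total, and a discrete Gronwall argument (with the running maximum absorbing the delayed term $y_{[nh]m}$) gives polynomial growth; higher moments follow by your binomial/Young bootstrap, at the price of a higher power $m^{s(p)}$ rather than the paper's $m^{1}$. Both are sound, because \lemref{lemma5} supplies decay of any polynomial order: your route is more elementary (no discrete BDG machinery) and exploits the dissipativity of the drift, while the paper's route is shorter and handles all $p$ in one stroke with a cleaner bound. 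Your closing observation that all moments are finite for each fixed $m$ (by induction on $n$) is indeed needed to legitimize the Gronwall step, and you dispose of it correctly.
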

\begin{proof}
First, we can by \eqref{yn} represent the approximation $y_{n}$ as following
\begin{equation*}
\begin{split}
  y_{n}=&\xi+\sigma(0,0)B_{n}+\sum_{i=0}^{n-1}\frac{h\mu(y_{i},y_{[ih]m})}{1+h\|\mu(y_{i},y_{[ih]m})\|}+\sum_{i=0}^{n-1}\big(\sigma(y_{i},y_{[ih]m})-\sigma(0,0)\big)\Delta B_{i}
 \end{split}
\end{equation*}
for all $n\in\{0,1,\ldots,Tm\},$ and $m\in\mathbb{N}$. The Lemma 4.7 in \cite{hm} and Burkholder-Davis-Gundy type inequality in Lemma 3.8 in \cite{mh} then give that
\begin{equation*}
  \begin{split}
  \|y_{n}\|_{L_{p}(\Omega,\mathbb{R})}\leq&\|\xi\|_{L_{p}(\Omega,\mathbb{R})}+p\sqrt{rT}\|\sigma(0,0)\|+Tm
  \\
  &+p\Big(\sum_{i=0}^{n-1}\sum_{\nu=1}^{r}\|\sigma_{\nu}(y_{i},y_{[ih]m})-\sigma_\nu(0,0)\|^2_{L_{p}(\Omega,\mathbb{R})}h\Big)^\frac{1}{2}
 \\
 \leq&\big(\|\xi\|_{L_{p}(\Omega,\mathbb{R})}+p\sqrt{rT}\|\sigma(0,0)\|+Tm\big)\\
 &+pK\sqrt{2rh}\Big(\sum_{i=0}^{n-1}\big(\|y_{i}\|^2_{L_{p}(\Omega,\mathbb{R})}+\|y_{[ih]m}\|^2_{L_{p}(\Omega,\mathbb{R})}\big)\Big)^\frac{1}{2}.
  \end{split}
\end{equation*}
Using Gronwall's inequality, we can get
\begin{equation*}
  \begin{split}
  \|y_{n}\|^2_{L_{p}(\Omega,\mathbb{R})}\leq&2\big(\|\xi\|_{L_{p}(\Omega,\mathbb{R})}+p\sqrt{rT}\|\sigma(0,0)\|+Tm\big)^2
  \\&+8p^2K^2rh\sum_{i=0}^{n-1}\sup_{j\in\{0,1,...,i\}}\|y_{j}\|^2_{L_{p}(\Omega,\mathbb{R})}\\
  \leq&2\big(\|\xi\|_{L_{p}(\Omega,\mathbb{R})}+p\sqrt{rT}\|\sigma(0,0)\|+Tm\big)^2e^{8p^2K^2Tr}
  \end{split}
\end{equation*}
for all $n\in\{0,1,\ldots,Tm\}, m\in\mathbb{N}$ and $p\in[2,\infty)$. For all $n\in\{0,1,2,...,Tm\}$, $m\in\mathbb{N}$ and $p\in[2,\infty)$, it is easy to obtain that

\begin{equation}\label{fore16}
 \|y_{n}\|_{L_{p}(\Omega,\mathbb{R})}\leq \sqrt{2}\big(\|\xi\|_{L_{p}(\Omega,\mathbb{R})}+p\sqrt{rT}\|\sigma(0,0)\|+Tm\big)e^{4p^2K^2Tr}.
\end{equation}
Of course \eqref{fore16} doesn't prove $\|y_{n}\|_{L_{p}(\Omega,\mathbb{R})}<\infty$, due to $m\in\mathbb{N}$ on the right-hand side of \eqref{fore16}. However, H\"{o}lder inequality and lemma 3.5 show that
\begin{equation}\label{3.25}
  \begin{split}
  &\sup_{m\in\mathbb{N}}\sup_{n\in\{0,1,\ldots,Tm\}}\|1_{(\Omega_{n})^c}y_{n}\|_{L_{p}(\Omega,\mathbb{R})}
  \\
  \leq&\sup_{m\in\mathbb{N}}\sup_{n\in\{0,1,\ldots,Tm\}}(m\|1_{(\Omega_{n})^c}\|_{L_{2p}(\Omega,\mathbb{R})})\times\Big(\sup_{m\in\mathbb{N}}\sup_{n\in\{0,1,\ldots,Tm\}}m^{-1}\|y_{n}\|_{L_{2p}(\Omega,\mathbb{R})}\Big)
  \\
  \leq&\sqrt{2}e^{4p^2K^2Tr}\Big(\sup_{m\in\mathbb{N}}m^{2p}\cdot\mathbb{P}[(\Omega_{Tm})^c]\Big)^\frac{1}{2p}\times\big(\|\xi\|_{L_{2p}(\Omega,\mathbb{R})}+p\sqrt{rT}\|\sigma(0,0)\|+T\big)
  \\
 <&\infty
  \end{split}
\end{equation}
for all $p\in[2,\infty)$. Additionally, Lemmas \ref{lemma1} and \ref{lemma4} give that
\begin{equation}\label{3.26}
 \sup_{m\in\mathbb{N}}\sup_{n\in\{0,1,\ldots,Tm\}}\|1_{\Omega_{n}}y_{n}\|_{L_{p}(\Omega,\mathbb{R})}\leq\sup_{m\in\mathbb{N}}\sup_{n\in\{0,1,\ldots,Tm\}}\|D_{n}\|_{L_{p}(\Omega,\mathbb{R})}<\infty
\end{equation}
for all $p\in[2,\infty)$. Combining (\ref{3.25}) and (\ref{3.26}) the theorem is proved.
\end{proof}
\begin{lemma}\label{6}
Let $y_{n}: \Omega\rightarrow\mathbb{R}^d$, for $n\in\mathbb{N}=\{1,2,...,Tm\}$ and $m\in\mathbb{N}$ be given by \eqref{yn}, then for all $p\in[1,\infty)$
\begin{equation}\label{fore17}
\begin{split}
  \sup_{m\in\mathbb{N}}\sup_{n\in\{0,1,\ldots,Tm\}}\mathbb{E}[\|\mu(y_{n},y_{[nh]m})\|^p]<\infty,~\sup_{m\in\mathbb{N}}\sup_{n\in\{0,1,\ldots,Tm\}}\mathbb{E}[\|\sigma(y_{n},y_{[nh]m})\|^p]<\infty.
\end{split}
\end{equation}
\end{lemma}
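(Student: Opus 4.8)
The plan is to reduce both estimates in \eqref{fore17} to a deterministic polynomial growth bound on each coefficient together with the uniform moment estimate \eqref{bound} supplied by Theorem~\ref{th1}. Since \eqref{bound} holds for \emph{every} exponent $p\in[1,\infty)$ uniformly in $m$, it suffices to show that $\|\mu(x,y)\|$ and $\|\sigma(x,y)\|$ are each dominated by a fixed polynomial in $\|x\|$ and $\|y\|$; the expectation of any such polynomial evaluated at $(y_n,y_{[nh]m})$ is then automatically finite and bounded uniformly in $m$ and $n$.

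The bound for $\sigma$ is immediate. The global Lipschitz condition \eqref{con1} gives the linear growth estimate $\|\sigma(x,y)\|\le\|\sigma(x,y)-\sigma(0,0)\|+\|\sigma(0,0)\|\le K(\|x\|+\|y\|)+\|\sigma(0,0)\|$, so that $\|\sigma(y_n,y_{[nh]m})\|^p$ is controlled by a constant multiple of $\|y_n\|^p+\|y_{[nh]m}\|^p+1$. Taking expectations and invoking \eqref{bound} with exponent $p$ yields the second bound in \eqref{fore17}.

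For $\mu$ the difficulty is its \emph{superlinear} growth, and here the derivative bound \eqref{con4} is the essential ingredient. First I would split $\|\mu(x,y)\|\le\|\mu(x,y)-\mu(0,y)\|+\|\mu(0,y)-\mu(0,0)\|+\|\mu(0,0)\|$. The middle term is handled by \eqref{con3}, which gives $\|\mu(0,y)-\mu(0,0)\|\le K\|y\|$. For the first term, since $\mu$ is continuously differentiable I would use the identity $\mu(x,y)-\mu(0,y)=\int_0^1\mu_x(tx,y)\,x\,dt$ together with \eqref{con4} to estimate $\|\mu(x,y)-\mu(0,y)\|\le\int_0^1 K(1+\|tx\|^c)\|x\|\,dt\le K(\|x\|+\|x\|^{c+1})$, using $\|tx\|^c\le\|x\|^c$ for $t\in[0,1]$. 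Combining these yields the polynomial growth bound $\|\mu(x,y)\|\le C(1+\|x\|^{c+1}+\|y\|)$ for a constant $C$ depending only on $K$ and $\|\mu(0,0)\|$.

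With this growth bound in hand the conclusion follows precisely because Theorem~\ref{th1} supplies moments of \emph{arbitrary} order. Raising the bound to the $p$-th power gives $\|\mu(y_n,y_{[nh]m})\|^p\le C'(1+\|y_n\|^{(c+1)p}+\|y_{[nh]m}\|^p)$, and applying \eqref{bound} with exponent $(c+1)p$ bounds the expectation uniformly in $m$ and $n$. The main obstacle is exactly this passage through the superlinear exponent: the argument only closes because \eqref{bound} holds for \emph{all} finite $p$, so that the high moment $\mathbb{E}\|y_n\|^{(c+1)p}$ stays finite and uniformly bounded; the derivative condition \eqref{con4} is what converts the structural control on $\mu$ into the usable two-sided polynomial growth estimate that feeds into these moment bounds.
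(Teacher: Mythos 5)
Your proposal is correct and follows essentially the same route as the paper: both reduce the claim to a polynomial growth bound on $\mu$ (via \eqref{con4} applied along the segment from $(0,y)$ to $(x,y)$, plus \eqref{con3} for the second argument) and a linear growth bound on $\sigma$ (via \eqref{con1}), then invoke Theorem~\ref{th1} with the inflated exponent $(c+1)p$. The only cosmetic difference is that you write the increment $\mu(x,y)-\mu(0,y)$ as $\int_0^1\mu_x(tx,y)\,x\,dt$ while the paper uses the mean-value form $\mu_x(\theta_1 x,y)\,x$; these are the same estimate (and your integral version is, if anything, the more careful one for vector-valued $\mu$).
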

\begin{proof}
By condition \eqref{con4}, we can get for any $x,y\in\mathbb{R}^d$
\begin{equation*}\begin{split}
 \|\mu(x,y)\|\leq&\|\mu(0,0)\|+\|\mu_x(\theta_1x,y)\|\|x\|+K\|y\|\\
 \leq&\|\mu(0,0)\|+K(1+\|x\|^c)\|x\|+K\|y\|\\
 \leq&\|\mu(0,0)\|+2K(1+\|x\|^{c+1})+K\|y\|.
 \end{split}
\end{equation*}
It comes from theorem \ref{th1} that
\begin{equation*}
  \begin{split}
  \sup_{m\in\mathbb{N}}&\sup_{n\in\{0,1,\ldots,Tm\}}\|\mu(y_{n},y_{[nh]m})\|_{L_{p}(\Omega,\mathbb{R})}\leq\|\mu(0,0)\|+2K\Big(1+ \sup_{m\in\mathbb{N}}\sup_{n\in\{0,1,\ldots,Tm\}}\|y_{n}\|^{c+1}_{L_{(c+1)p}(\Omega,\mathbb{R})}\Big)\\
  &+K \sup_{m\in\mathbb{N}}\sup_{n\in\{0,1,\ldots,Tm\}}\|y_{[nh]m}\|_{L_{p}(\Omega,\mathbb{R})}<\infty
  \end{split}
\end{equation*}
for all $p\in[1,\infty)$. According to the global-Lipschiz condition (\ref{con1}) and theorem \ref{th1}, for any $x,y\in\mathbb{R}^d$, we obtain\\
\begin{equation*}
  \begin{split}
  \sup_{m\in\mathbb{N}}&\sup_{n\in\{0,1,\ldots,Tm\}}\|\sigma(y_{n},y_{[nh]m})\|_{L_{p}(\Omega,\mathbb{R})}\leq\|\sigma(0,0)\|+K\Big(\sup_{m\in\mathbb{N}}\sup_{n\in\{0,1,\ldots,Tm\}}\|y_{n}\|_{L_{p}(\Omega,\mathbb{R})}\Big)\\
  &+K\Big( \sup_{m\in\mathbb{N}}\sup_{n\in\{0,1,\ldots,Tm\}}\|y_{[nh]m}\|_{L_{p}(\Omega,\mathbb{R})}\Big)<\infty.
  \end{split}
\end{equation*}
\end{proof}

Now we are in a position to give the proof of theorem \ref{th2}
\begin{proof}
We define $\underline{t}=t_{n}$ for any $t\in[t_{n},t_{n+1})$, and $n\in\{0,1,\ldots,Tm-1\}$, $m\in\mathbb{N}$. It is known from (\ref{yt})
\begin{equation}\label{yt0}
  y(t)=\xi+\int_{0}^{t}\frac{\mu(y(\underline{s}),y([\underline{s}]))}{1+h\|\mu(y(\underline{s}),y([\underline{s}]))\|}ds+\int_{0}^{t}\sigma(y(\underline{s}),y([\underline{s}]))dB(s)
\end{equation}
for all $t\in[0,T]$. Note that
\begin{equation*}
\begin{split}
  x(t)-y(t)=&\int_{0}^{t}\Big(\mu(x(s),x([s]))-\frac{\mu(y(\underline{s}),y([\underline{s}]))}{1+h\|\mu(y(\underline{s}),y([\underline{s}]))\|}\Big)ds\\
  &+\sum_{\nu=1}^{r}\int_{0}^{t}\Big(\sigma_\nu(x(s),x([s])-\sigma_\nu(y(\underline{s}),y([\underline{s}]))\Big)dB_\nu(s)
\end{split}
\end{equation*}
for all $t\in[0,T]$ $\mathbb{P}$-a.s.. Hence, It\^{o}'s formula yields that
\begin{equation*}
  \begin{split}
  \|x(t)-y(t)\|^2=&2\int_{0}^{t}\big<x(s)-y(s),\mu(x(s),x([s]))-\mu(y(\underline{s}),y([\underline{s}]))\big>ds\\
  &+2h\int_{0}^{t}\Big<x(s)-y(s),\frac{\mu(y(\underline{s}),y([\underline{s}]))\|\mu(y(\underline{s}),y([\underline{s}]))\|}{1+h\|\mu(y(\underline{s}),y([\underline{s}]))\|}\Big>ds\\
  &+\sum_{\nu=1}^{r}\int_{0}^{t}\big\|\sigma_\nu(x(s),x([s]))-\sigma_\nu(y(\underline{s}),y([\underline{s}]))\big\|^2ds\\
  &+2\sum_{\nu=1}^{r}\int_{0}^{t}\big<x(s)-y(s),\sigma_\nu(x(s),x([s]))-\sigma_\nu(y(\underline{s}),y([\underline{s}]))\big>dB_\nu(s)\\
  =&A_1+A_2+A_3+A_4
    \end{split}
\end{equation*}
for $A_1$, $A_2$, $A_3$, we can estimate
\begin{equation}\label{main1}
  \begin{split}
  A_1=&2\int_{0}^{t}\big<x(s)-y(s),\mu(x(s),x([s]))-\mu(y(s),x([s]))\big>ds\\
  &+2\int_{0}^{t}\big<x(s)-y(s),\mu(y(s),x([s]))-\mu(y(s),y([\underline{s}]))\big>ds\\
  &+2\int_{0}^{t}\big<x(s)-y(s),\mu(y(s),y([\underline{s}]))-\mu(y(\underline{s}),y([\underline{s}]))\big>ds\\
  \leq&(2K+2)\int_{0}^{t}\|x(s)-y(s)\|^2ds+K^2\int_{0}^{t}\big\|x([s])-y([\underline{s}])\big\|^2ds\\
  &+\int_{0}^{t}\big\|\mu(y(s),y([\underline{s}]))-\mu(y(\underline{s}),y([\underline{s}]))\big\|^2ds,
  \end{split}
\end{equation}
\begin{equation}\label{main2}
\begin{split}
 A_2\leq&\int_{0}^{t}\|x(s)-y(s)\|^2ds+h^2\int_{0}^{t}\|\mu(y(\underline{s}),y([\underline{s}]))\|^4ds,~~~~~~~~~~~~
\end{split}
\end{equation}
\begin{equation}\label{main3}
  \begin{split}
 A_3\leq&2\sum_{\nu=1}^{r}\int_{0}^{t}\big\|\sigma_\nu(x(s),x([s])-\sigma_\nu(y(s),y([\underline{s}]))\big\|^2ds\\
  &+2\sum_{\nu=1}^{r}\int_{0}^{t}\big\|\sigma_\nu(y(s),y([\underline{s}])))-\sigma_\nu(y(\underline{s}),y([\underline{s}]))\big\|^2ds\\
  \leq&4K^2r\int_{0}^{t}\big\|x(s)-y(s)\big\|^2ds+4K^2r\int_{0}^{t}\big\|x([s])-y([\underline{s}])\big\|^2ds~~
  \end{split}
\end{equation}
$$  +2K^2r\int_{0}^{t}\big\|y(s)-y(\underline{s})\big\|^2ds.$$
The Global-inequality, \eqref{main1}, \eqref{main2}, \eqref{main3} and $2ab\leq a^2+b^2$ shows that
\begin{equation}\label{mainsup}
  \begin{split}
  \sup_{t\in[0,t_1]}&\big\|x(t)-y(t)\big\|^2\leq(4K^2r+2K+3)\int_{0}^{t_1}\|x(s)-y(s)\|^2ds\\
  &+(4r+1)K^2\int_{0}^{t_1}\|x([s])-y([\underline{s}])\|^2ds+h^2\int_{0}^{T}\|\mu(y(\underline{s}),y([\underline{s}]))\|^4ds\\
  &+\int_{0}^{T}\big\|\mu(y(s),y([\underline{s}]))-\mu(y(\underline{s}),y([\underline{s}]))\big\|^2ds+2K^2r\int_{0}^{T}\big\|y(s)-y(\underline{s})\big\|^2ds\\
  &+2\sup_{t\in[0,t_1]}\Big|\sum_{\nu=1}^{r}\int_{0}^{t}\big<x(s)-y(s),\sigma_\nu(x(s),x([s]))-\sigma_\nu(y(\underline{s}),y([\underline{s}]))\big>dB_\nu(s)\Big|
  \end{split}
\end{equation}
$\mathbb{P}$-a.s. for all $t_1\in[0,T]$. The Burkholder-Davis-Gundy type inequality in Lemma 3.7 in \cite{mh} hence yields that
\begin{equation}\label{3.39}
  \begin{split}
  \Big\|\sup_{t\in[0,t_1]}&\big\|x(t)-y(t)\big\|^2\Big\|_{L_{\frac{p}{2}}(\Omega,\mathbb{R})}\leq\int_{0}^{T}\big\|\mu(y(s),y([\underline{s}]))-\mu(y(\underline{s}),y([\underline{s}]))\big\|^2_{L_{p}(\Omega,\mathbb{R})}ds\\
 &+(8K^2r+K^2+2K+3)\int_{0}^{t_1}\Big(\sup_{u\in[0,s]}\big\|x(u)-y(u)\big\|^2_{L_{p}(\Omega,\mathbb{R})}\Big)ds\\
  &+2K^2r\int_{0}^{T}\big\|y(s)-y(\underline{s})\big\|^2_{L_{p}(\Omega,\mathbb{R})}ds+h^2\int_{0}^{T}\|\mu(y(\underline{s}),y([\underline{s}]))\|^4_{L_{2p}(\Omega,\mathbb{R})}ds\\
  &+\underbrace{2p\Big(\sum_{\nu=1}^{r}\int_{0}^{t_1}\big\|\big<x(s)-y(s),\sigma_\nu(x(s),x([s])-\sigma_\nu(y(\underline{s}),y([\underline{s}]))\big>\big\|^2_{L_\frac{p}{2}(\Omega,\mathbb{R})}ds\Big)^\frac{1}{2}}_\alpha
  \end{split}
\end{equation}
for all $t_1\in[0,T],$ and all $p\in[4,\infty)$. Next the Cauchy-Schwarz inequality, H\"{o}lder inequality and again the inequality $2ab\leq a^2+b^2$ for all $a,b\in\mathbb{R}$ imply that
\begin{equation}
  \begin{split}
\alpha\leq&2p\Big(\sum_{\nu=1}^{r}\int_{0}^{t_1}\big\|x(s)-y(s)\big\|^2_{L_p(\Omega,\mathbb{R})}\big\|\sigma_\nu(x(s),x([s]))-\sigma_\nu(y(\underline{s}),y([\underline{s}]))\big\|^2_{L_p(\Omega,\mathbb{R})}ds\Big)^\frac{1}{2}\\
  \leq&2p\Big(\sup_{s\in[0,t_1]}\big\|x(s)-y(s)\big\|_{L_p(\Omega,\mathbb{R})}\Big)\Big(\sum_{\nu=1}^{r}\int_{0}^{t_1}\big\|\sigma_\nu(x(s),x([s]))-\sigma_\nu(y(\underline{s}),y([\underline{s}]))\big\|^2_{L_p(\Omega,\mathbb{R})}ds\Big)^\frac{1}{2}\\
  \leq&\frac{1}{2}\sup_{t\in[0,t_1]}\big\|x(t)-y(t)\big\|^2_{L_p(\Omega,\mathbb{R})}+4p^2K^2r\int_{0}^{t_1}\big(\big\|x(s)-y(\underline{s})\big\|^2_{L_p(\Omega,\mathbb{R})}+\big\|x([s])-y([\underline{s}])\big\|^2_{L_p(\Omega,\mathbb{R})}\big)ds\\ \leq&\frac{1}{2}\sup_{t\in[0,t_1]}\big\|x(t)-y(t)\big\|^2_{L_p(\Omega,\mathbb{R})}+8p^2K^2r\int_{0}^{t_1}\big\|x(s)-y(s)\big\|^2_{L_p(\Omega,\mathbb{R})}ds\\
  &+8p^2K^2r\int_{0}^{t_1}\big\|y(s)-y(\underline{s})\big\|^2_{L_p(\Omega,\mathbb{R})}ds+4p^2K^2r\int_{0}^{t_1}\big\|x([s])-y([s])\big\|^2_{L_p(\Omega,\mathbb{R})}ds
  \end{split}
\end{equation}
for all $t\in[0,T],$ and all $p\in[4,\infty)$. Inserting inequality above into \eqref{3.39} and applying the estimate $(a+b)^2\leq 2a^2+2b^2$ for all $a,b\in\mathbb{R}$, then yield that
\begin{equation}\label{3.40}
  \begin{split}
\frac{1}{2}\Big\|&\sup_{t\in[0,t_1]}\big\|x(t)-y(t)\big\|\Big\|^2_{L_{p}(\Omega,\mathbb{R})}\leq(2K^2r+8p^2K^2r)\int_{0}^{T}\big\|y(s)-y(\underline{s})\big\|^2_{L_{p}(\Omega,\mathbb{R})}ds\\
&+(8K^2r+K^2+2K+3+12p^2K^2r)\int_{0}^{t_1}\Big(\sup_{u\in[0,s]}\big\|x(u)-y(u)\big\|^2_{L_{p}(\Omega,\mathbb{R})}\Big)ds\\
   &+h^2\int_{0}^{T}\|\mu(y(\underline{s}),y([\underline{s}]))\|^4_{L_{2p}(\Omega,\mathbb{R})}ds+\int_{0}^{T}\big\|\mu(y(s),y([\underline{s}))-\mu(y(\underline{s}),y([\underline{s}]))\big\|^2_{L_{p}(\Omega,\mathbb{R})}ds\\
  \end{split}
\end{equation}
for all $t\in[0,T],$ and all $p\in[4,\infty)$. In the next step Gronwall's Lemma shows that
\begin{equation}\label{3.41}
  \begin{split}
  \Big\|\sup_{t\in[0,t_1]}\big\|x(t)-y(t)&\big\|\Big\|^2_{L_{p}(\Omega,\mathbb{R})}\leq 2e^{T(8K^2r+K^2+2K+3+12p^2K^2r)}\times \Big((2K^2r+8p^2K^2r)\\
  &\times\int_{0}^{T}\big\|y(s)-y(\underline{s})\big\|^2_{L_{p}(\Omega,\mathbb{R})}ds+h^2\int_{0}^{T}\big\|\mu(y(\underline{s}),y([\underline{s}]))\big\|^4_{L_{2p}(\Omega,\mathbb{R})}ds\\
  &+\int_{0}^{T}\big\|\mu(y(s),y([\underline{s}]))-\mu(y(\underline{s}),y([\underline{s}]))\big\|^2_{L_{p}(\Omega,\mathbb{R})}ds\Big)
  \end{split}
\end{equation}
and hence, the inequality $\sqrt{a+b+c}\leq\sqrt{a}+\sqrt{b}+\sqrt{c}$ for all $a,b,c\in[0,\infty)$ gives that
\begin{equation}\label{*}
  \begin{split}
  &\Big\|\sup_{t\in[0,t_1]}\big\|x(t)-y(t)\big\|\Big\|_{L_{p}(\Omega,\mathbb{R})}\\
  \leq& \sqrt{2T}e^{\frac{1}{2}T(8K^2r+K^2+2K+3+12p^2K^2r)}\times\Big(\sqrt{2K^2r+8p^2K^2r}\Big[\sup_{t\in[0,T]}\big\|y(t)-y(\underline{t})\big\|_{L_{p}(\Omega,\mathbb{R})}\Big]\\
  &+h\Big[\sup_{t\in[0,T]}\big\|\mu(y(\underline{t}),y([\underline{t}]))\big\|^2_{L_{2p}(\Omega,\mathbb{R})}\Big]+\sup_{t\in[0,T]}\big\|\mu(y(t),y([\underline{t}]))-\mu(y(\underline{t}),y([\underline{t}]))\big\|_{L_{p}(\Omega,\mathbb{R})}\Big)
  \end{split}
\end{equation}
for all $N\in\mathbb{N}$ and all $p\in[4,\infty)$. Additionally, the Burkholder-Davis-Gundy inequality in Lemma 3.7 in \cite{mh} shows that
\begin{equation*}\begin{split}
  \sup_{t\in[0,T]}&\big\|y(t)-y(\underline{t})\big\|_{L_{p}(\Omega,\mathbb{R})}\leq h\Big(\sup_{t\in[0,T]}\big\|\mu(y(\underline{t}),y([\underline{t}]))\big\|\Big)_{L_{p}(\Omega,\mathbb{R})}\\
  &+\sup_{t\in[0,T]}\big\|\int_{\underline{t}}^{t}\sigma(y(\underline{s}),y([\underline{s}]))dB(s)\big\|_{L_{p}(\Omega,\mathbb{R})}\\
  \leq&h\Big(\sup_{n\in\{0,1,\ldots,Tm\}}\big\|\mu(y_{n},y_{[nh]m}\big\|\Big)_{L_{p}(\Omega,\mathbb{R})}\\
  &+p\Big(\int_{\underline{t}}^{t}\sum_{\nu=1}^{r}\big\|\sigma_\nu(y(\underline{s}),y([\underline{s}]))\big\|^2_{L_{p}(\Omega,\mathbb{R})}ds\Big)^\frac{1}{2}\\
  \leq&h\Big(\sup_{n\in\{0,1,\ldots,Tm\}}\big\|\mu(y_{n},y_{[nh]m}\big\|\Big)_{L_{p}(\Omega,\mathbb{R})}\\
  &+p\sqrt{hr}\Big(\sup_{\nu\in\{0,1,\ldots,r\}}\sup_{n\in\{0,1,\ldots,Tm\}}\big\|\sigma_\nu(y_{n},y_{[nh]m}\big\|\Big)_{L_{p}(\Omega,\mathbb{R})}
  \end{split}
\end{equation*}
for all $m\in\mathbb{N}$ and all $p\in[2,\infty)$. Lemma \ref{6} implies that, for all $p\in[1,\infty)$, there exists $C_1>0$ independent of $h$ such that
\begin{equation}\label{ny}
\sup\limits_{t\in[0,T]}\big\|y(t)-y(\underline{t})\big\|_{L_{p}(\Omega,\mathbb{R})}<C_1h^{\frac{1}{2}}.
\end{equation}
%\newpage
%\begin{figure}
%\begin{center}
%\begin{minipage}[c]{0.5\textwidth}
%\centering
%\includegraphics[scale=0.6]{x3con.eps}%就在前面括号中写图片名
%%\centering
%%\caption{Respective~~throughputs~~of~~sharing~~~~~~~~bottleneck link by Vegas and Reno}
%\end{minipage}%
%\begin{minipage}[c]{0.5\textwidth}
% \centering
% \includegraphics[scale=0.6]{x3big.eps}
%%\label{fig:levfig}
%\end{minipage}
%\end{center}
%\label{1}
%\setlength{\abovecaptionskip}{-0.5cm}
%\caption{Left one is simulations of tamed-Euler method with $h=2^{-12}, 2^{-11}, 2^{-10}$ respectively. Right one id the zoomed graph of left.}
%\end{figure}
%\setlength{\belowcaptionskip}{-0.5cm}
In particular, we obtain that for all $p\in[1,\infty)$
\begin{equation}\label{ty}
\sup\limits_{t\in[0,T]}\big\|y(t)\big\|_{L_{p}(\Omega,\mathbb{R})}<\infty.
\end{equation}
Moreover, the estimate
 \begin{equation}\label{supmu}
   \begin{split}
   &\sup_{t\in[0,T]}\big\|\mu(y(t),y([\underline{t}]))-\mu(y(\underline{t}),y([\underline{t}]))\big\|_{L_{p}(\Omega,\mathbb{R})}\\
   \leq&K\Big(1+2\sup_{t\in[0,T]}\big\|y(t)\big\|^c_{L_{2cp}(\Omega,\mathbb{R})}\Big)\Big(\sup_{t\in[0,T]}\big\|y(t)-y(\underline{t})\big\|_{L_{2p}(\Omega,\mathbb{R})}\Big)
   \end{split}
 \end{equation}
 for all $p\in[1,\infty)$. Then inequality \eqref{ny} and  \eqref{ty} hence show that there exists $C_2>0$ independent of $h$
 \begin{equation}\label{3.43}
\sup_{t\in[0,T]}\big\|\mu(y(t),y([\underline{t}]))-\mu(y(\underline{t}),y([\underline{t}]))\big\|_{L_{p}(\Omega,\mathbb{R})}\leq C_2h^{\frac{1}{2}}
 \end{equation}
 for all $p\in[1,\infty)$. We obtain from \eqref{*}, \eqref{ny}, \eqref{3.43} and lemma \ref{6} that there exists non-negative constant $C$ independent of $h$ such that \eqref{conclusion} holds. The proof is complete.
\end{proof}
\section{Numerical Experiments}
\setcounter{equation}{0}
\noindent
In this section, we give two numerical experiments to illustrate the strong convergence and the convergence order.

we consider
\begin{eqnarray}\label{4.1}
\begin{cases}
dx(t)=\big(-x(t)^\alpha+a(x(t)+x([t]))\big)dt+b(x(t)+x([t]))dB(t),~t\in[0,T],\\
x(0)=c.
\end{cases}
\end{eqnarray}

In the first numerical experiment we used the parameters~$\alpha=3$, $a=0.5$, $b=1$, $c=1.5$. In the second numerical experiment, we used parameters $\alpha=5$, $a=4.5$, $b=3$, $c=1$.

We square both sides of \eqref{conclusion} with $p=2$, we get the mean square error $\mathbb{E}\big[\sup_{t\in[0,T]}\|x(t)-y(t)\|^2\big]$ which should be bounded by $Ch$. The mean square error at time $T$ was estimated in the following way. A set of $30$ blocks each containing $100$ outcomes($\omega_{ij}:~1\leq i\leq 30,~1\leq j\leq 100$) were simulated. We denoted by $y(T,\omega_{ij})$ the numerical solution of the $j$th trajectory in the $i$th blocks and $x(T,\omega_{ij})$ the exact solution of \eqref{4.1} in the $j$th trajectory and $i$th block. The 'exact solution' was computed on a very fine mesh(we used $262144$ step).

Let $\epsilon$ denote the mean square error. Then by the law of large numbers, we conclude that
\begin{equation*}
\epsilon(T)=\frac{1}{3000}\sum_{i=1}^{30}\sum_{j=1}^{100}\|x(T,\omega_{ij})-y(T,\omega_{ij})\|^2.
\end{equation*}
There are three test in each numerical experiment with $T=1,2,3$. We can see from the table 1 and table 2, the ratios of errors in the tables are consistent with the theoretical rate of convergence as stated in theorem \ref{th2}.
\begin{table}
  \centering
%\small
\scriptsize
 \begin{tabular}{|c|c|c|c|c|c|c|c|c|c|c|}  % {lccc} 表示各列元素对齐方式，left-l,right-r,center-c
\hline% 表示画横线
step~~~&$\epsilon(1)$~~~&ratio~~~&$\epsilon(2)$~~~&ratio~~~&$\epsilon(3)$~~~&ratio\\
\hline
$2^{-8}$~~~&$0.0022$~~~&$*$~~~&$0.0038$~~~&$*$~~~&$0.0089$~~~&$*$\\
\hline
$2^{-9}$~~~&$0.0010$~~~&$2.2000$~~~&$0.0020$~~~&$1.9000$~~~&$0.0051$~~~&$1.7451$\\
\hline
$2^{-10}$~~~&$0.0005$~~~&$2.0000$~~~&$0.0010$~~~&$2.0000$~~~&$0.0018$~~~&$2.8883$\\
\hline
$2^{-11}$~~~&$0.0002$~~~&$2.5000$~~~&$0.0006$~~~&$1.6667$~~~&$0.0009$~~~&$2.0000$\\
\hline
$2^{-12}$~~~&$0.0001$~~~&$2.0000$~~~&$0.0003$~~~&$2.0000$~~~&$0.0004$~~~&$2.2500$\\
\hline
\end{tabular}
\caption{The error at times $T=1,2,3$, for the first numerical experiment.}\label{1}
%\vspace{-3mm}
\end{table}
\begin{table}
  \centering
%\small
\scriptsize
 \begin{tabular}{|c|c|c|c|c|c|c|c|c|c|c|}  % {lccc} 表示各列元素对齐方式，left-l,right-r,center-c
\hline% 表示画横线
step~~~&$\epsilon(1)$~~~&ratio~~~&$\epsilon(2)$~~~&ratio~~~&$\epsilon(3)$~~~&ratio\\
\hline
$2^{-8}$~~~&$0.0379$~~~&$*$~~~&$0.1550$~~~&$*$~~~&$0.2779$~~~&$*$\\
\hline
$2^{-9}$~~~&$0.0150$~~~&$2.5252$~~~&$0.0844$~~~&$1.8359$~~~&$0.1311$~~~&$2.1198$\\
\hline
$2^{-10}$~~~&$0.0073$~~~&$2.0592$~~~&$0.0443$~~~&$1.9068$~~~&$0.0808$~~~&$1.6225$\\
\hline
$2^{-11}$~~~&$0.0033$~~~&$2.2375$~~~&$0.0158$~~~&$2.8107$~~~&$0.0471$~~~&$1.7155$\\
\hline
$2^{-12}$~~~&$0.0015$~~~&$2.1252$~~~&$0.0107$~~~&$1.4727$~~~&$0.0401$~~~&$1.1746$\\
\hline
\end{tabular}
\caption{The error at times $T=1,2,3$, for the second numerical experiment.}\label{2}
\vspace{-3mm}
\end{table}

\end{document}